\numberwithin{equation}{section}
\def\3bar{{|\hspace{-.02in}|\hspace{-.02in}|}}
\def\E{{\mathcal{E}}}
\def\T{{\mathcal{T}}}
\def\pT{{\partial T}}
\def\W{{\mathcal{W}}}
\def\bn{{\bm{n}}}
\newtheorem{algorithm}{Algorithm}[section]
\title {Low Regularity Primal-Dual Weak Galerkin Finite Element Methods for Ill-Posed Elliptic Cauchy Problems}
\author{Chunmei Wang \thanks{Department of Mathematics \& Statistics, Texas Tech University, Lubbock, TX 79409, USA (chunmei.wang@ttu.edu). The research of Chunmei Wang was partially supported by National Science Foundation Award DMS-1849483.}  
%\and Junping Wang\thanks{Division of Mathematical Sciences, National Science Foundation, Alexandria, VA 22314
 %(jwang@nsf.gov). The research of Junping Wang was supported by the NSF IR/D program, while working at National Science Foundation. However, any opinion, finding, and conclusions or recommendations expressed in this material are those of the author and do not necessarily reflect the views of the National Science Foundation.}
}
\begin{document}

\maketitle

\begin{abstract}
A new primal-dual weak Galerkin (PDWG) finite element method is introduced and analyzed for the ill-posed elliptic Cauchy problems with ultra-low regularity assumptions on the exact solution. The Euler-Lagrange formulation resulting from the PDWG scheme yields a system of equations involving both the primal equation and the adjoint (dual) equation. The optimal order error estimate for the primal variable in a low regularity assumption is established. A series of numerical experiments are illustrated to validate effectiveness of the developed theory.
\end{abstract}

\begin{keywords}
primal-dual, finite element method, weak
Galerkin, low regularity, elliptic Cauchy equations, ill-posed.
\end{keywords}

\begin{AMS}
Primary, 65N30, 65N15, 65N12, 74N20; Secondary, 35B45, 35J50,
35J35
\end{AMS}

\pagestyle{myheadings}
\section{Introduction}
In this paper we consider the ill-posed elliptic Cauchy model problem: Find an unknown function $u$ satisfying
\begin{equation}\label{model}
\begin{split}
-\nabla\cdot (a \nabla u)=&f, \ \qquad
\text{in}\quad \Omega,\\ u=&g_1, \qquad \text{on}\quad \Gamma_D,\\ 
a\nabla u  \cdot \bn=&g_2, \qquad \text{on}\quad \Gamma_N,
\end{split}
\end{equation}
where $\Omega\subset \mathbb R^d (d=2, 3)$ is an open bounded domain with Lipschitz continuous boundary $\partial \Omega$, $\Gamma_D$ and $\Gamma_N$ are two segments of the domain boundary, $f\in L^2(\Omega)$, the Cauchy data $g_1\in H^{\frac{1}{2}}(\Gamma_D)$ and $g_2\in (H_{00}^{\frac{1}{2}}(\Gamma_N))'$,  the coefficient tensor $a(x)$ is symmetric, bounded, and uniformly positive definite in the domain $\Omega$, and $\bn$ is the unit outward normal vector to $\Gamma_N$.
The elliptic Cauchy problem is to solve partial differential equations (PDEs) in a domain where over-specified boundary conditions are given on parts of the domain boundary. The elliptic Cauchy problem is also to solve a data completion problem with missing boundary data on the remaining parts of the domain boundary.

The elliptic Cauchy problem  arises in science and engineering, e.g., vibration, wave propagation, cardiology, electromagnetic scattering, geophysics, nondestructive testing and steady-state inverse heat conduction. In particular, the Cauchy problem for second order elliptic equations plays an important role in the inverse boundary value problems modeled by elliptic PDEs.  Readers are referred to \cite{s1, s2, s3, s4, s5, s6, s7, s8, s9, s10, s11, s12, k6, k10, k11} and the references cited therein for details of the elliptic Cauchy problems.

There has been a long history tracing back to Hadamard \cite{e17, k22, h1, h2, h3} for the study of the elliptic Cauchy problem \eqref{model}. When it comes to the case of $\Gamma_D=\Gamma_N$, Hadamard demonstrated the ill-posedness of the problem \eqref{model} by constructing an example where the solution does not depend continuously on the Cauchy data. Hadamard and others \cite{s13, s14, s15} found that a small perturbation in the data might result in an enormous error in the numerical solution for elliptic Cauchy problem. The Schwartz reflection principle \cite{Gilbarg-Trudinger} indicates that in most cases the existence of solutions for the model problem \eqref{model} can not be guaranteed  for any given Cauchy data $g_1$ and $g_2$. However, \cite{Andrieux} showed that the elliptic Cauchy problem \eqref{model} has a solution for any given Cauchy data $g_1\times g_2\in M$ where $M$ is a dense subset of $H^{\frac12}(\Gamma_D)\times [H_{00}^{\frac12}(\Gamma_N)]^\prime$. It is well-known that the solution of the elliptic Cauchy problem \eqref{model} (if it exists) must be unique, provided that $\Gamma_D\cap\Gamma_N$ is a nontrivial portion of the domain boundary. The Cauchy data is thus assumed to be compatible such that the solution exists. Throughout this paper, we assume that $\Gamma_D\cap\Gamma_N$ is a nontrivial portion of the domain boundary so that the solution (if it exists) of the elliptic Cauchy problem \eqref{model} is unique.

In the literature, there are two main numerical strategies developed for the elliptic Cauchy problem: (1) Tikhonov regularization is applied to the problem with missing boundary data to determine the solution; (2) A sequence of well-posed problems in the same equation is  iteratively employed to approximate the ill-posed problem. \cite{s16} developed the numerical method for the elliptic Cauchy problem  based on the tools of boundary integral equations, single-layer potential function and jump relations. \cite{Falk-Monk} introduced an optimization approach based on least squares and Tikhonov regularization techniques. The  finite element method based on an optimal control characterization of the Cauchy problem was analyzed in \cite{r10}. The stabilized finite element method \cite{Burman, ErikBurman-EllipticCauchy} based on a general framework involving both the original equation and its adjoint equation is applicable to a wide class of ill-posed problems where only weak continuity is necessary. More numerical methods were proposed and analyzed for the elliptic Cauchy problem including the conjugate gradient boundary element method, the boundary knot method, the alternating iterative boundary element method, the moment method, the boundary particle method, the method of level set type,  and the method of fundamental solutions \cite{Leitao, s4, s17, s18, s19, s20, s21,s22,s12}. Other theoretical and applied work have also been developed such as regularization methods \cite{r5, r6}, Steklov-Poincare theory \cite{r2, r3, Belgacem}, minimal error methods  \cite{r8, r9} and quasi-reversibility methods \cite{Bourgeois}.

This paper is devoted to the development of a new primal-dual weak Galerkin finite element method for the elliptic Cauchy model problem \eqref{model}. The PDWG framework provides mechanisms to enhance the stability of a numerical scheme by combining solutions of the primal and the dual (adjoint) equation.  PDWG methods have been successfully applied to solve the second order elliptic equation in non-divergence form \cite{ww2016}, the elliptic Cauchy problem \cite{w2018,ww2018}, the Fokker-Planck type equation \cite{ww2017}, the convection diffusion equation \cite{cao, ludmil}, and the transport equation \cite{tr1, tr2}. The PDWG method has the following advantages over other existing schemes: (1) it offers a symmetric and well-posed problem for the ill-posed elliptic Cauchy problem; (2) it is consistent in the sense that the system is satisfied by the exact solution (if it exists);  (3) it is applicable to a wide class of PDE problems for which no traditional variational formulation is available; and (4) it admits general finite element partitions consisting of arbitrary polygons or polyhedra. The main contribution of this paper lies in two aspects: (1) the development of a new PDWG scheme that admits boundary data with low regularity due to noise or uncertainties; and (2) the establishment of a mathematical convergence theory with optimal order error estimates under low regularity assumptions for the exact solution.

Throughout the paper, we use the standard notations for Sobolev spaces
and norms. For any open bounded domain $D\subset \mathbb{R}^d$ with
Lipschitz continuous boundary,  denote by $\|\cdot\|_{s, D}$, $|\cdot|_{s, D}$ and $(\cdot,\cdot)_{s, D}$ the norm, seminorm and the inner product in the Sobolev space $H^s(D)$ for $s\ge 0$, respectively. The norms in $H^{s}(D)$ ($s<0$) are defined by duality with the norms in $H^{|s|}(D)$. The space $H^0(D)$ coincides with $L^2(D)$, where the norm and the inner product are denoted by $\|\cdot \|_{D}$ and $(\cdot,\cdot)_{D}$, respectively. When $D=\Omega$, or when the domain of integration is clear from the context,  the subscript $D$ is dropped in the norm and the inner product notation. For convenience,  we use ``$\lesssim$'' to denote ``less than or equal to'' up to a generic constant which is independent of important parameters such as the mesh size and physical parameters.

The paper is organized as follows. In Section 2, the weak formulation of the elliptic Cauchy model problem is proposed and the discrete weak differential operator is briefly reviewed. In Section 3, the PDWG algorithm for the elliptic Cauchy model problem is introduced. Section 4 demonstrates the solution existence and uniqueness of the proposed PDWG scheme. The error equations for the PDWG scheme are derived in Section 5. In Section 6, the optimal order error estimates are established for the PDWG method in some discrete Sobolev norms. In Section 7, a series of numerical results are reported to demonstrate the effectiveness of the PDWG method developed in the previous sections.

\section{Weak Formulations and Discrete Weak Differential Operators} This section will introduce the weak formulation of the elliptic Cauchy model problem \eqref{model} and briefly review the discrete weak differential operator \cite{wy3655}.

Denote by $\Gamma_N^c=\partial \Omega \setminus \Gamma_N$ and 
$\Gamma_D^c=\partial \Omega \setminus \Gamma_D$.
The weak formulation of the elliptic Cauchy model problem
\eqref{model} seeks $u\in L^2(\Omega)$ satisfying
\begin{equation}\label{weakform}
 (u,  \nabla \cdot (a\nabla \sigma)) =-(f, \sigma)-\langle g_2, \sigma\rangle_{\Gamma_N}+\langle g_1, a\nabla \sigma \cdot \bn\rangle_{\Gamma_D}, \quad \forall \sigma\in W,
\end{equation}
where $W=\{\sigma\in H^1(\Omega), a\nabla \sigma \in H(\operatorname{div}; \Omega),  \sigma|_{\Gamma_N^c}=0, a\nabla \sigma \cdot \bn|_{\Gamma_D^c}=0\}.$

%\begin{remark}
%The advantages of the proposed weak formulation (\ref{weakform}) lie in the following two aspects: (1) the exact solution of the model problem \eqref{model} admits a low regularity $u\in L^2(\Omega)$; (2) it is easy to deal with the noise of the Cauchy data $g_1$ and $g_2$ when their distributions are known since the Cauchy data are embedded in the weak formulation.
%\end{remark}

Let ${\cal T}_h$ be a partition of the domain $\Omega$ into polygons in 2D or polyhedra in 3D which is shape regular in the sense of \cite{wy3655}. Denote by ${\mathcal E}_h$ the set of all edges or flat faces in ${\cal T}_h$ and  ${\mathcal E}_h^0={\mathcal E}_h \setminus \partial\Omega$ the set of all interior edges or flat faces. Denote by $h_T$ the meshsize of $T\in {\cal T}_h$ and
$h=\max_{T\in {\cal T}_h}h_T$ the meshsize for the partition ${\cal T}_h$. For the convenience of analysis and without loss of generality, in what follows of this paper,  we assume that the coefficient tensor $a$ is piecewise constants with respect to the partition $\T_h$. The analysis could be easily generalized to the case that the coefficient tensor $a$ is piecewise smooth functions with respect to the partition $\T_h$. 

Let $T\in \mathcal{T}_h$ be a polygonal or polyhedral region with
boundary $\partial T$. For any $\phi\in H^{1-\theta}(T)$ ($0\leq \theta <1$) and any polynomial $\psi$, the following trace inequalities hold true  \cite{wy3655}
\begin{equation}\label{tracein}
 \|\phi\|^2_{\partial T} \lesssim h_T^{-1}\|\phi\|_T^2+h_T^{1-2\theta} \|\phi\|_{1-\theta,T}^2, \qquad
 \|\psi\|^2_{\partial T} \lesssim h_T^{-1}\|\psi\|_T^2.
\end{equation}

A weak function on $T\in\T_h$ is denoted by a triplet $\sigma=\{\sigma_0,\sigma_b, \sigma_n\}$ such that $\sigma_0\in L^2(T)$, $\sigma_b\in L^{2}(\partial T)$ and $\sigma_n\in L^{2}(\partial T)$. The first and the second components, namely $\sigma_0$ and $\sigma_b$, represent the values of $\sigma$ in the interior and on the boundary of $T$ respectively. The third component $\sigma_n$ can be understood as the value of $a\nabla \sigma \cdot \bn$ on $\pT$, where $\bn$ is the unit outward normal vector on $\pT$. 
Note that $\sigma_b$ and $\sigma_n$ may not necessarily be the traces of $\sigma_0$ and $a\nabla \sigma_0 \cdot \bn$ on $\partial T$. Denote by $\W(T)$ the space of all weak functions on $T$; i.e.,
\begin{equation}\label{2.1}
\W(T)=\{\sigma=\{\sigma_0,\sigma_b, \sigma_n \}: \sigma_0\in L^2(T), \sigma_b\in
L^{2}(\partial T), \sigma_n\in L^{2}(\partial T)\}.
\end{equation}

For simplicity, denote by $\mathcal{L}=\nabla \cdot (a\nabla)$. The weak $\mathcal{L}$ operator of $\sigma\in \W(T)$, denoted by $\mathcal{L}_w  \sigma$, is defined as a linear functional such that
 \begin{eqnarray*}
(\mathcal{L}_w \sigma, \phi)_T & :=& (\sigma_0, \mathcal{L} \phi)_T -\langle \sigma_b, a \nabla \phi\cdot \bm{n}\rangle_{\partial T} +
\langle \sigma_n,\phi\rangle_{\partial T},
\end{eqnarray*}
for all $\phi$ satisfying $\phi\in H^1(\Omega)$ and $a \nabla \phi \in H(div; T)$. 

Denote by $P_r(T)$ the space of polynomials on the element $T$ with degree no more than $r$. A discrete version of $\mathcal{L}_w \sigma$, denoted by $\mathcal{L}_{w, r, T}\sigma$, is defined as the unique polynomial in $P_r(T)$ satisfying
\begin{eqnarray}\label{disvergence}
 (\mathcal{L}_{w,r,T} \sigma, w)_T &=& 
 (\sigma_0, \mathcal{L} w)_T-\langle \sigma_b, a \nabla w\cdot
\bm{n}\rangle_{\partial T}+ \langle \sigma_n,w\rangle_{\partial T},  \forall w\in P_r(T),
\end{eqnarray} 
which, using the usual integration by parts, gives \begin{eqnarray}\label{disgradient*}
   (\mathcal{L}_{w,r,T} \sigma, w)_T
  & = & (\mathcal{L}\sigma_0, w)_T+\langle \sigma_0-\sigma_b, a \nabla w\cdot
\bm{n}\rangle_{\partial T}- \langle a\nabla \sigma_0\cdot \bn-\sigma_n, w \rangle_{\partial T}.
\end{eqnarray} 

\section{Primal-Dual Weak Galerkin Scheme}\label{Section:WGFEM}
For any given integer $k\geq 2$, let $W_k(T)$ be the local discrete weak function space; i.e.,
$$
W_k(T)=\{\{\sigma_0, \sigma_b, \sigma_n\}: \sigma_0\in P_k(T), \sigma_b\in P_k(e),  \sigma_n \in P_{k-1}(e),e\subset \partial T\}.
$$
Patching $W_k(T)$ over all the elements $T\in {\cal T}_h$
through a common value $\sigma_b$ on the interior interface $\E_h^0$, we obtain a global weak finite element space $W_h$; i.e.,
$$
W_h=\big\{\{\sigma_0,\sigma_b, \sigma_n\}:\{\sigma_0,\sigma_b, \sigma_n\}|_T\in W_k(T), \forall T\in \mathcal{T}_h \big\}.
$$
We further introduce the subspace of $W_h$ with homogeneous Dirichlet  and Neumann boundary conditions, denoted by $W_h^0$; i.e.,
$$
W_h^0=\{v\in W_h: \sigma_b=0 \ \text{on}\ \Gamma_N^c, \sigma_n=0 \ \text{on}\ \Gamma_D^c\}.
$$
In addition, let $M_h$ be the finite element space consisting of piecewise polynomials of degree $k-2$; i.e.,
$$
M_h=\{w: w|_T\in P_{k-2}(T),  \forall T\in \mathcal{T}_h\}.
$$

For simplicity,  for any $\sigma=\{\sigma_0, \sigma_b, \sigma_n\}\in
W_h$, denote by $\mathcal{L}_w \sigma$ the discrete weak operator
$\mathcal{L}_{w, k-2, T} \sigma$ computed  by using
(\ref{disvergence}) on each element $T$; i.e.,
$$
(\mathcal{L}_w \sigma)|_T=\mathcal{L}_{w ,k-2,T}(\sigma|_T), \qquad \forall \sigma\in W_h.
$$
 
For any $ \lambda, \sigma \in W_h$, and $u\in M_h$, we introduce the following bilinear forms
\begin{eqnarray*}
s(\lambda, \sigma)&=&\sum_{T\in \mathcal{T}_h} s_T(\lambda, \sigma),\label{EQ:s-form}\\
b(u, \sigma)&=&\sum_{T\in \mathcal{T}_h}(u, \mathcal{L}_w \sigma)_T,\label{EQ:b-form}
\end{eqnarray*}
where
\begin{equation*}\label{EQ:sT-form}
\begin{split}
s_T(\lambda, \sigma)=&h_T^{-1}\langle \lambda_0-\lambda_b, \sigma_0-\sigma_b\rangle_{\partial T}
 +h_T\langle  a \nabla \lambda_0 \cdot \bn-\lambda_n,  a \nabla \sigma_0 \cdot \bn-\sigma_n\rangle_{\partial T}.\\
\end{split}
\end{equation*}

The primal-dual weak Galerkin finite element scheme based on the weak formulation \eqref{weakform} for the elliptic Cauchy model problem \eqref{model} is described as follows:
\begin{algorithm}[PDWG Scheme]
  Find $(u_h;\lambda_h)\in M_h
\times W_{h}^0$ satisfying
\begin{eqnarray}\label{32}
s(\lambda_h, \sigma)+b(u_h,\sigma)&=&-(f,\sigma_0)- \langle g_2, \sigma_b \rangle_{\Gamma_N}+\langle g_1, \sigma_n\rangle_{\Gamma_D},\quad  \forall \sigma\in W^0_h,\\
b(v, \lambda_h)&=& 0,  \qquad\qquad\qquad\qquad\qquad\qquad\qquad\qquad \forall v\in M_{h}.\label{2}
\end{eqnarray}
\end{algorithm}

On each element $T$, denote by $Q_0$ the $L^2$ projection operator onto $P_k(T)$; on each edge or face $e\subset\partial T$, denote by $Q_b$ and $Q_n$ the $L^2$ projection operators onto $P_{k}(e)$ and $P_{k-1}(e)$,
respectively. For any $w\in H^1(\Omega)$, denote by $Q_h w$ the $L^2$ projection
onto the weak finite element space $W_h$ such that on each element
$T$,
$$
Q_hw=\{Q_0w,Q_bw, Q_n(a\nabla w \cdot \bn)\}.
$$
Denote by $\mathcal{Q}^{k-2}_h$ the $L^2$ projection operator onto the space $M_h$.
 
\begin{lemma}\label{Lemma5.1} \cite{ludmil}  The $L^2$ projection operators $Q_h$ and $\mathcal{Q}^{k-2}_h$ satisfy the following commuting property:
 \begin{equation}\label{div}
\mathcal{L}_{w}(Q_h w) = \mathcal{Q}_h^{k-2}( \mathcal{L} w),   \qquad \forall w \in H^1(T), \; a\nabla w \in H(\operatorname{div}; T).
\end{equation}
\end{lemma}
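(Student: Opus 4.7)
The plan is to start directly from the definition \eqref{disvergence} of the discrete weak $\mathcal{L}$-operator applied to $Q_h w$, test against an arbitrary $\phi\in P_{k-2}(T)$, and systematically eliminate the three $L^2$ projections $Q_0$, $Q_b$, $Q_n$ by exploiting polynomial degrees. Since $a$ is piecewise constant and $\phi\in P_{k-2}(T)$, one has $\mathcal{L}\phi\in P_{k-4}(T)\subset P_k(T)$, $a\nabla\phi\cdot\bn|_e\in P_{k-3}(e)\subset P_k(e)$, and $\phi|_e\in P_{k-2}(e)\subset P_{k-1}(e)$. By the defining property of the $L^2$ projections this lets me replace $Q_0 w$ by $w$, $Q_b w$ by $w$, and $Q_n(a\nabla w\cdot\bn)$ by $a\nabla w\cdot\bn$ in the three pairings.

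Next, I would apply integration by parts twice to $(w,\mathcal{L}\phi)_T$: the first pass gives $-(a\nabla w,\nabla\phi)_T+\langle w,a\nabla\phi\cdot\bn\rangle_{\partial T}$ (this is legitimate since $w\in H^1(T)$), and the second pass gives $(\mathcal{L}w,\phi)_T-\langle a\nabla w\cdot\bn,\phi\rangle_{\partial T}+\langle w,a\nabla\phi\cdot\bn\rangle_{\partial T}$ (this uses $a\nabla w\in H(\operatorname{div};T)$ in order to move the divergence back, with the boundary duality interpreted in the $H^{-1/2}$--$H^{1/2}$ sense). Plugging this into the expression from the previous step, the two boundary terms involving $a\nabla\phi\cdot\bn$ cancel, and the two boundary terms involving $a\nabla w\cdot\bn$ also cancel, leaving simply
\begin{equation*}
(\mathcal{L}_{w,k-2,T}(Q_h w),\phi)_T=(\mathcal{L}w,\phi)_T\qquad\forall\phi\in P_{k-2}(T).
\end{equation*}

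Finally, since $\mathcal{Q}_h^{k-2}$ is the $L^2$-projection onto $M_h=P_{k-2}(T)$ (on each $T$), the right-hand side equals $(\mathcal{Q}_h^{k-2}(\mathcal{L}w),\phi)_T$. Because both $\mathcal{L}_w(Q_h w)$ and $\mathcal{Q}_h^{k-2}(\mathcal{L}w)$ lie in $P_{k-2}(T)$, testing against all $\phi\in P_{k-2}(T)$ identifies them, giving \eqref{div}.

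The proof has no real obstacle, but the one point requiring care is the bookkeeping of polynomial degrees so that every $L^2$ projection can indeed be removed without loss; in particular, one should verify that the choice $\sigma_n\in P_{k-1}(e)$ for the discrete weak space is precisely matched to the degree of $\phi|_e$ for $\phi\in P_{k-2}(T)$, which is what makes the pairing $\langle Q_n(a\nabla w\cdot\bn),\phi\rangle_{\partial T}=\langle a\nabla w\cdot\bn,\phi\rangle_{\partial T}$ hold exactly. The same remark applies to the other two projections, and it is this degree-matching in the definition of $W_k(T)$ that makes the commuting diagram work.
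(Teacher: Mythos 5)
Your proof is correct and is precisely the standard argument behind this commuting identity (the paper itself states the lemma without proof, deferring to the cited reference, and that reference's proof proceeds exactly as you do: unwind \eqref{disvergence} at $\sigma=Q_hw$, strip the three projections by degree counting using the piecewise-constant assumption on $a$, and integrate by parts twice so the boundary terms cancel). The only point worth flagging is a convention you share with the paper rather than a gap: for $Q_n(a\nabla w\cdot\bn)$ to be a genuine $L^2(\partial T)$ projection one implicitly needs the normal trace to lie in $L^2(\partial T)$ rather than merely $H^{-1/2}(\partial T)$, or else the pairing must be read in the duality sense as you note.
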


\section{Existence and Uniqueness}
In this section, we shall establish the solution existence and uniqueness of the PDWG scheme (\ref{32})-(\ref{2}).

In the finite element space $W_h$, we introduce a semi-norm induced from the stabilizer; i.e., 
\begin{equation}\label{EQ:triplebarnorm}
\3bar \sigma \3bar = s(\sigma, \sigma)^{\frac{1}{2}}, \qquad \forall  \sigma\in W_h.
\end{equation}
We introduce a semi-norm for the finite element space $M_h$; i.e.,
\begin{equation}\label{EQ:triplebarnorm2}
\3bar v\3bar_1=\Big(\sum_{T\in \mathcal{T}_h}h_T^2\|\mathcal{L} v\|^2_T+\sum_{e\in \mathcal{E}_h}h_T\|[[a \nabla v\cdot \bn]]\|^2_{e}+\sum_{e\in \mathcal{E}_h}h_T^{-1}\|[[v]]\|^2_{e}\Big)^{\frac{1}{2}}, \  \forall v\in M_h, 
\end{equation}
where $[[v]]=v|_{\pT_1}-v|_{\pT_2}$ is the jump of $v$ on the interior edge $e=T_1 \cap T_2 \in {\cal E}_h^0$ and $[[v]]=v$ on the boundary edge $e\subset \partial \Omega$; the same definition applies to $[[a \nabla v\cdot \bn]]$ with $\bn$ being the unit outward normal direction to $\pT$.

\begin{lemma}\label{Lemma:8.1}\cite{wy3655}
Let $k\geq 2$ and $\mathcal{T}_h$ be a shape regular partition of the domain $\Omega$ specified in \cite{wy3655}. For $0\leq t \leq 2$, the following estimates hold true:
\begin{eqnarray} 
%& \sum_{T\in \mathcal{T}_h}h_T^{2t}\|u-Q_0u\|^2_{t,T} \lesssim
%h^{2(m+1)}\|u\|^2_{m+1},&\qquad m\in [t-1,k],\\
%\label{error2}
& \sum_{T\in \mathcal{T}_h}h_T^{2t}\|u-{\cal
Q}^{(k-2)}_hu\|^2_{t,T}  \lesssim h^{2m}\|u\|^2_{m},&\qquad m \in [t, k-1].\label{term3}
\end{eqnarray}
\end{lemma}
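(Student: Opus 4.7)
My plan is to reduce the global estimate to a local approximation estimate on each element $T \in \mathcal{T}_h$ and then sum. Specifically, I would first establish the elementwise bound
\begin{equation*}
\|u - \mathcal{Q}^{k-2}_h u\|_{t,T} \lesssim h_T^{m-t}\|u\|_{m,T}, \qquad 0 \le t \le m, \quad m \in [t, k-1],
\end{equation*}
for each $T$, from which the stated sum over $\mathcal{T}_h$ follows immediately after multiplication by $h_T^{2t}$, squaring, summing, and using $h_T \le h$.

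To obtain the local bound, my first step would be to note that $\mathcal{Q}^{k-2}_h u|_T$ is the $L^2(T)$ best-approximation of $u$ by polynomials in $P_{k-2}(T)$. In particular, for any $p \in P_{k-2}(T)$ one has $\|u - \mathcal{Q}^{k-2}_h u\|_T \le \|u - p\|_T$, and more generally one can control the $H^t(T)$-seminorm of the error by the corresponding seminorm of $u - p$ together with an inverse estimate applied to $p - \mathcal{Q}^{k-2}_h u \in P_{k-2}(T)$, at the cost of a factor $h_T^{-t}$. Next, choosing $p$ to be an averaged Taylor polynomial (Dupont--Scott) of $u$ on a ball contained in $T$, the Bramble--Hilbert lemma yields $\|u - p\|_{s,T} \lesssim h_T^{m-s}|u|_{m,T}$ for $0 \le s \le m$, and combining these gives the local estimate above. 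I would phrase the argument directly on $T$ using the Dupont--Scott averaged Taylor construction so as to avoid a reference-element mapping, which is crucial since $\mathcal{T}_h$ consists of general shape-regular polygons or polyhedra in the sense of \cite{wy3655}.

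The main obstacle is precisely this polytopal aspect: standard textbook proofs rely on affine equivalence to a reference simplex, which is unavailable here. The shape regularity assumption of \cite{wy3655} is what enables the Dupont--Scott machinery (a chunkiness parameter bound, the existence of an inscribed ball of radius comparable to $h_T$, and a suitable extension/trace/inverse estimate framework) to be applied uniformly in $T$. Once those geometric ingredients are accepted from the cited framework, the proof is routine. The range condition $m \in [t, k-1]$ is natural: the upper limit $m \le k-1$ reflects that $P_{k-2}(T)$ can reproduce polynomials of degree $k-2$, yielding approximation order $h_T^{k-1}$ in $L^2$, while $m \ge t$ is needed so that $h_T^{m-t}$ is a genuine approximation factor rather than a loss.
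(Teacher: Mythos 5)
Your argument is correct and is essentially the standard one: the paper itself supplies no proof of this lemma, simply citing \cite{wy3655}, where the estimate is obtained by exactly the route you describe --- local $L^2$ best approximation, an averaged Taylor polynomial with the Bramble--Hilbert lemma on star-shaped (shape-regular) polytopes, and an inverse estimate on $P_{k-2}(T)$, followed by summation over $T$. The only point worth making explicit is that the lemma is later invoked with non-integer indices $t=2-\theta$ and $t=1-\theta$, so both the inverse inequality $\|q\|_{t,T}\lesssim h_T^{-t}\|q\|_{T}$ for $q\in P_{k-2}(T)$ and the Bramble--Hilbert bound $\|u-p\|_{s,T}\lesssim h_T^{m-s}|u|_{m,T}$ must be taken in their fractional-order forms (obtainable by space interpolation within the Dupont--Scott framework), and the final summation uses that the broken Sobolev--Slobodeckij seminorm is dominated by the global one; none of this changes your argument, but it should be stated.
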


\begin{lemma}\label{unilem} \cite{w2018, ww2018} Let $\Omega$ be an open bounded and connected domain in $\mathbb R^d \ (d=2,3)$ with Lipschitz continuous domain boundary $\partial\Omega$.  Assume that $\Gamma_D\cap \Gamma_N$ is a non-trivial portion of the domain boundary $\partial\Omega$. The solution  of the elliptic Cauchy problem \eqref{model} (if it exists) is unique.
\end{lemma}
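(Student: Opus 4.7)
The plan is to reduce to the homogeneous problem and then invoke a unique continuation (Holmgren-type) argument using the fact that both the Dirichlet and Neumann traces of the difference vanish on the overlap $\Gamma_D\cap\Gamma_N$. Concretely, suppose $u_1$ and $u_2$ are two solutions of \eqref{model} in $L^2(\Omega)$. By linearity, their difference $w=u_1-u_2$ satisfies
\begin{equation*}
-\nabla\cdot(a\nabla w)=0 \ \text{ in }\ \Omega,\qquad w=0 \ \text{ on }\ \Gamma_D,\qquad a\nabla w\cdot\bn=0 \ \text{ on }\ \Gamma_N.
\end{equation*}
The interior elliptic regularity of the equation $-\nabla\cdot(a\nabla w)=0$ with $a$ symmetric, bounded and uniformly positive definite ensures that $w$ is sufficiently smooth in the interior of $\Omega$ to make both boundary traces meaningful.

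Next, I would exploit the hypothesis that $\Gamma_D\cap\Gamma_N$ is a nontrivial portion of $\partial\Omega$. On this overlap, $w$ has simultaneously vanishing Dirichlet and co-normal Neumann data, i.e., $w=0$ and $a\nabla w\cdot\bn=0$ on a set of positive $(d{-}1)$-dimensional measure of the Lipschitz boundary. I would then apply the classical Holmgren uniqueness theorem, or equivalently the Aronszajn-type unique continuation principle for second order elliptic operators with sufficiently regular coefficients, to extend $w$ by zero across $\Gamma_D\cap\Gamma_N$ into a neighbourhood beyond $\partial\Omega$; the extended function solves the same homogeneous elliptic equation (after a trivial extension of $a$) and vanishes on an open set. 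Unique continuation then forces $w\equiv 0$ in the connected component of $\Omega$ touching $\Gamma_D\cap\Gamma_N$, and connectedness of $\Omega$ propagates this to all of $\Omega$, yielding $u_1=u_2$.

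The main obstacle is delivering a rigorous unique continuation statement under the stated low regularity on $a$: Holmgren's theorem in its classical form requires real-analytic coefficients, while Aronszajn's theorem requires Lipschitz (or at least $C^{0,1}$) coefficients. Since the paper allows $a$ to be merely symmetric, bounded, and uniformly positive definite, I would need either to strengthen the hypothesis to Lipschitz (or piecewise smooth, as is assumed later in the discretization section) or cite a suitable unique continuation result in the available regularity class. Because this lemma is quoted from \cite{w2018,ww2018}, I would follow the convention adopted there and invoke the unique continuation theorem under the same implicit smoothness assumption on $a$ used in those references, noting that the piecewise-smooth assumption on $a$ introduced in Section 2 is compatible with applying unique continuation elementwise and then stitching across interelement interfaces via the transmission conditions built into the equation.
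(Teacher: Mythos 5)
The paper offers no proof of this lemma: it is imported verbatim from \cite{w2018, ww2018}, so there is nothing internal to compare against. Your argument --- reduce to the homogeneous difference $w=u_1-u_2$, extend $w$ by zero across the overlap $\Gamma_D\cap\Gamma_N$ into an exterior neighbourhood, and invoke weak unique continuation to conclude $w\equiv 0$ on the connected domain --- is precisely the standard proof and is the one used in the cited references, so in substance you have reproduced the intended argument. Two refinements are worth making. First, since the solution is only sought in $L^2(\Omega)$ via \eqref{weakform}, the pointwise traces $w|_{\Gamma_D}$ and $a\nabla w\cdot\bn|_{\Gamma_N}$ are not a priori defined, and interior elliptic regularity does not rescue them (it gives smoothness on compact subsets, not up to $\partial\Omega$). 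The clean route is to bypass traces entirely: for a ball $B$ centred at a point of $\Gamma_D\cap\Gamma_N$ with $B\cap\partial\Omega\subset\Gamma_D\cap\Gamma_N$, any $\sigma\in C_0^\infty(\Omega\cup B)$ restricts to an admissible test function in $W$, so \eqref{weakform} with homogeneous data gives $(\tilde w,\nabla\cdot(a\nabla\sigma))_{\Omega\cup B}=0$ for the zero-extension $\tilde w$; hence $\tilde w$ is a distributional solution vanishing on the open set $B\setminus\overline\Omega$, and unique continuation applies. Second, your caveat about the coefficient is not merely cosmetic: weak unique continuation genuinely fails for merely bounded measurable $a$ when $d=3$ (Pli\'s--Miller counterexamples), so Lipschitz continuity (or $d=2$, where bounded measurable suffices) must be assumed for this step; the cited references carry such an assumption implicitly, and your proposal is correct to make it explicit rather than rely on the bare hypotheses stated in \eqref{model}.
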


\begin{lemma} \label{lem3-new} 
The following {\it inf-sup} condition holds true. For any $v\in M_h$, there exists $\rho_v\in W_h^0$ satisfying
\begin{eqnarray} \label{EQ:inf-sup-condition-01}
 b(v, \rho_v) = \3bar v\3bar_1^2, \quad \3bar \rho_v \3bar\leq \beta \3bar v\3bar_1,
\end{eqnarray}
where $\beta>0$ is a constant independent of the meshsize $h$.
\end{lemma}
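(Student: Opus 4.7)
I plan a direct construction. Given $v \in M_h$, I will build $\tilde\rho_v \in W_h^0$ with $b(v,\tilde\rho_v) \gtrsim \3bar v\3bar_1^2$ and $\3bar \tilde\rho_v\3bar \lesssim \3bar v\3bar_1$; the rescaled vector $\rho_v := (\3bar v\3bar_1^2/b(v,\tilde\rho_v))\,\tilde\rho_v$ then satisfies the exact identity $b(v,\rho_v) = \3bar v\3bar_1^2$ together with $\3bar \rho_v\3bar \leq \beta \3bar v\3bar_1$. The starting point is to expand $b(v,\tilde\rho_v)$ using (\ref{disvergence}) with test polynomial $w = v|_T \in P_{k-2}(T)$, valid because $v \in M_h$:
\begin{equation*}
(v, \mathcal{L}_w\tilde\rho_v)_T = (\tilde\rho_0, \mathcal{L} v)_T - \langle \tilde\rho_b, a\nabla v \cdot \bn\rangle_{\partial T} + \langle \tilde\rho_n, v\rangle_{\partial T}.
\end{equation*}
Summing over $T$ and regrouping the boundary integrals edge-by-edge, the single-valuedness of $\tilde\rho_b$ on $\E_h^0$ turns its contribution into $\sum_{e\in\E_h^0}\langle \tilde\rho_b, [[a\nabla v\cdot\bn]]\rangle_e$ plus a boundary piece supported on $\Gamma_N$ (since $\tilde\rho_b = 0$ on $\Gamma_N^c$), while the (possibly two-valued) $\tilde\rho_n$ produces an edge contribution that can be paired with $[[v]]$ on $\E_h^0$ plus a $\Gamma_D$ boundary piece (since $\tilde\rho_n = 0$ on $\Gamma_D^c$).

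Next, I would make the local choices $\tilde\rho_0|_T := h_T^2 (\mathcal{L} v)|_T \in P_k(T)$; $\tilde\rho_b|_e := -h_e[[a\nabla v\cdot\bn]]$ on interior and $\Gamma_N$-edges (and $0$ on $\Gamma_N^c$); and on each interior edge $e = T_1\cap T_2$, the asymmetric prescription $\tilde\rho_n^{T_1}|_e := h_e^{-1}[[v]]$, $\tilde\rho_n^{T_2}|_e := -h_e^{-1}[[v]]$, completed by $\tilde\rho_n := h_e^{-1}v$ on $\Gamma_D$-edges and $0$ on $\Gamma_D^c$. With these choices, each summand in $\3bar v\3bar_1^2$ is produced by the corresponding pairing in $b(v,\tilde\rho_v)$: the volume term gives $\sum_T h_T^2\|\mathcal{L} v\|_T^2$, the $\tilde\rho_b$-term gives $\sum_e h_e\|[[a\nabla v\cdot\bn]]\|_e^2$ on interior and $\Gamma_N$ edges, and the $\tilde\rho_n$-term gives $\sum_e h_e^{-1}\|[[v]]\|_e^2$ on interior and $\Gamma_D$ edges. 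For the stabilizer bound, the trace/inverse inequalities (\ref{tracein}) applied to $\tilde\rho_0 = h_T^2\mathcal{L} v$ (polynomial, with $a$ piecewise constant) yield $h_T^{-1}\|\tilde\rho_0\|_{\partial T}^2 + h_T\|a\nabla\tilde\rho_0\cdot\bn\|_{\partial T}^2 \lesssim h_T^2\|\mathcal{L} v\|_T^2$, while the edge magnitudes of $\tilde\rho_b$ and $\tilde\rho_n$ reproduce exactly $h_e\|[[a\nabla v\cdot\bn]]\|_e^2$ and $h_e^{-1}\|[[v]]\|_e^2$ up to shape-regular constants ($h_e \sim h_T$). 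Summing over elements and edges gives $\3bar\tilde\rho_v\3bar \lesssim \3bar v\3bar_1$.

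The main obstacle is the boundary bookkeeping: the $W_h^0$ constraints $\tilde\rho_b|_{\Gamma_N^c}=0$ and $\tilde\rho_n|_{\Gamma_D^c}=0$ remove precisely the channels that would have produced $h_e\|a\nabla v\cdot\bn\|_e^2$ on $\Gamma_N^c$-edges and $h_e^{-1}\|v\|_e^2$ on $\Gamma_D^c$-edges. Recovering these terms requires exploiting the alternative integration-by-parts expansion (\ref{disgradient*}) to transfer some of the missing boundary content onto a local adjustment of $\tilde\rho_0$ in the boundary elements, chosen so that the conormal trace of $\tilde\rho_0$ supplies the missing jump while the trace/inverse bound is preserved. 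Verifying these adjustments consistently across the three types of boundary edges ($\Gamma_D\cap\Gamma_N$, $\Gamma_N\setminus\Gamma_D$, $\Gamma_D\setminus\Gamma_N$) and checking that no spurious cross-terms survive the regrouping in Step~1 is the technical heart of the argument; once this is done, shape-regularity of $\T_h$ delivers an $h$-independent inf-sup constant $\beta$.
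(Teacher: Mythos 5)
Your construction $\tilde\rho=\{h_T^2\mathcal{L}v,\,-h_e[[a\nabla v\cdot\bn]],\,h_e^{-1}[[v]]\}$ is exactly the one used in the paper, and your verification that the interior-edge pairings reproduce $\3bar v\3bar_1^2$ and that the trace and inverse inequalities give $\3bar\tilde\rho\3bar\lesssim\3bar v\3bar_1$ matches the paper's computations; the final rescaling is harmless (and unnecessary once the identity $b(v,\tilde\rho)=\3bar v\3bar_1^2$ is exact).

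The problem is the step you yourself label ``the technical heart'': recovering the contributions $h_e\|a\nabla v\cdot\bn\|_e^2$ on $\Gamma_N^c$-edges and $h_e^{-1}\|v\|_e^2$ on $\Gamma_D^c$-edges, which appear in $\3bar v\3bar_1^2$ (since $[[\cdot]]$ is the full trace on boundary edges) but are annihilated by the constraints $\rho_b|_{\Gamma_N^c}=0$ and $\rho_n|_{\Gamma_D^c}=0$. You leave this unexecuted, and the mechanism you sketch cannot work as described: by \eqref{disvergence}, $b(v,\rho)=\sum_T(\rho_0,\mathcal{L}v)_T-\langle\rho_b,a\nabla v\cdot\bn\rangle_{\pT}+\langle\rho_n,v\rangle_{\pT}$, so the component $\rho_0$ enters only through the volume pairing $(\rho_0,\mathcal{L}v)_T$ --- neither its trace nor its conormal trace ever appears in $b(v,\cdot)$. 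The identity \eqref{disgradient*} is an algebraic rearrangement of the same functional and creates no new degrees of freedom, so ``transferring the missing boundary content onto a local adjustment of $\tilde\rho_0$'' cannot manufacture the pairings $\langle\cdot,a\nabla v\cdot\bn\rangle_e$ on $\Gamma_N^c$ or $\langle\cdot,v\rangle_e$ on $\Gamma_D^c$. As written, your argument proves the inf-sup bound only for the part of $\3bar v\3bar_1$ supported on interior edges and on $\Gamma_N$ and $\Gamma_D$. (For what it is worth, the paper's own proof disposes of this same point with the phrase ``with boundary value properly adjusted to satisfy the homogeneous boundary data'' and then asserts the exact identity $b(v,\rho)=\3bar v\3bar_1^2$ over all of $\mathcal{E}_h$; a complete argument must either show the $\Gamma_N^c$ and $\Gamma_D^c$ terms are dominated by the remaining ones or restrict the edge sums in \eqref{EQ:triplebarnorm2} accordingly. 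So you have correctly isolated the genuine difficulty, but neither flagged nor closed it with a viable device.)
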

\begin{proof} For  any $v\in M_h$,  we choose $\rho \in W_h^0$ such that on each element $T$, $\rho=\{h_T^2{\cal L} v, -h_e [[a \nabla v\cdot \bn]], h_e^{-1}[[v]]\}$, with boundary value properly adjusted to satisfy the homogeneous boundary data. From the trace inequality (\ref{tracein}), triangle inequality, and \eqref{EQ:triplebarnorm2}, we have
\begin{equation}\label{EQ:Estimate:002}
\begin{split}
&\sum_{T\in \mathcal{T}_h }h_T^{-1}\|\rho_0-\rho_b\|_{\pT}^2\\
=&\sum_{T\in \mathcal{T}_h}
h_T^{-1}\|h_T^2{\cal L} v+h_e [[a \nabla v\cdot \bn]]\|_{\pT}^2\\
\lesssim &\sum_{T\in \mathcal{T}_h}
\Big(h_T^3\|{\cal L} v\|_{\pT}^2+h_T\| [[a \nabla v\cdot \bn]]\|_{\pT}^2\Big)\\
\lesssim & \sum_{T\in \mathcal{T}_h}\Big( h_T^2\|{\cal L} v\|_{T}^2+h_T\|[[a \nabla v\cdot \bn]]\|_{\pT}^2\Big)\\
\lesssim & \ \3bar v\3bar_1^2.
 \end{split}
\end{equation} 
Analogously, using the trace inequality \eqref{tracein}, inverse inequality, triangle inequality, and \eqref{EQ:triplebarnorm2} gives
 \begin{equation}\label{EQ:Estimate:003}
\begin{split}
&\sum_{T\in \mathcal{T}_h }h_T\|a\nabla
\rho_0\cdot\bn-[[v]]\|_{\pT}^2\\
= & \sum_{T\in \mathcal{T}_h }h_T\|a\nabla
(h_T^2{\cal L} v)\cdot\bn-h_T^{-1}[[v]]\|_{\pT}^2\\
\lesssim & \sum_{T\in \mathcal{T}_h }h_T\Big(h_T^4 \|a\nabla
({\cal L} v)\cdot\bn\|_{\pT}^2+h_T^{-2}\|[[v]]\|_{\pT}^2\Big)\\
\lesssim & \sum_{T\in \mathcal{T}_h }\Big(h_T^{2}\| 
{\cal L} v \|_{T}^2+h_T^{-1}\|[[v]]\|_{\pT}^2\Big)\\
\lesssim & \ \3bar v\3bar_1^2.
\end{split}
\end{equation}

From \eqref{EQ:triplebarnorm}-\eqref{EQ:triplebarnorm2}, combining the estimates (\ref{EQ:Estimate:002})-(\ref{EQ:Estimate:003}) yields
\begin{equation}\label{EQ:inf-sup-condition-02}
\3bar \rho\3bar \lesssim \3bar v\3bar_1.
\end{equation}

Next, using (\ref{disvergence}) yields \begin{equation}\label{EQ:April05:100}
\begin{split}
b(v, \rho) = & \sum_{T\in \mathcal{T}_h}(v, \mathcal{L}_w \rho)_T  \\
 =&\sum_{T\in \mathcal{T}_h}(\rho_0,  \mathcal{L} v)_T-\langle \rho_b, a \nabla v\cdot \bn\rangle_{\pT}+\langle \rho_n, v\rangle_{\pT}\\
  =&\sum_{T\in \mathcal{T}_h}(\rho_0,  \mathcal{L} v)_T-\sum_{e\in \mathcal{E}_h}\langle \rho_b,  [[a \nabla v\cdot \bn]]\rangle_{e}+\sum_{e\in \mathcal{E}_h}\langle \rho_n, [[v]]\rangle_{e}\\
=&\sum_{T\in \mathcal{T}_h}h_T^2 \|\mathcal{L} v\|^2_T+\sum_{e\in \mathcal{E}_h}h_T\|[[a \nabla v\cdot \bn]]\|^2_{e}+\sum_{e\in \mathcal{E}_h}h_T^{-1}\|[[v]]\|^2_{e}\\
=&\ \3bar v\3bar_1^2,
\end{split}
\end{equation}
which, together with \eqref{EQ:inf-sup-condition-02}, completes the proof of the lemma.
\end{proof}

The following theorem is the main result on solution existence and uniqueness for the PDWG scheme \eqref{32}-\eqref{2}.

\begin{theorem}\label{thmunique1}
Assume that $\Gamma_D\cap\Gamma_N$ contains a nontrivial portion of the domain boundary $\partial\Omega$ and $\Gamma_D\cup \Gamma_N\Subset \partial\Omega$ is a proper closed subset. The PDWG finite element algorithm (\ref{32})-(\ref{2}) has one and only one solution.
\end{theorem}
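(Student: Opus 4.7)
Since \eqref{32}--\eqref{2} is a square linear system in the finite-dimensional space $M_h\times W_h^0$, existence is equivalent to uniqueness, so the whole task reduces to showing that the homogeneous problem (i.e., $f=0$, $g_1=0$, $g_2=0$) admits only the trivial solution $(u_h,\lambda_h)=(0,0)$.

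The plan is to run a standard saddle-point style energy argument in two stages. \emph{Stage 1 (energy identity).} Assume the data vanish. Test \eqref{32} with $\sigma=\lambda_h$ and \eqref{2} with $v=u_h$ and subtract; the $b$-terms cancel and we obtain $s(\lambda_h,\lambda_h)=0$, hence $\3bar\lambda_h\3bar=0$. From the explicit form of $s_T$, this forces, on every $T\in\T_h$, the identities $\lambda_0=\lambda_b$ and $a\nabla\lambda_0\cdot\bn=\lambda_n$ on $\partial T$. Because $\lambda_b,\lambda_n$ are single-valued on interior faces, this in turn gives $\lambda_0\in H^1(\Omega)$ with $a\nabla\lambda_0\in H(\operatorname{div};\Omega)$, together with $\lambda_0=0$ on $\Gamma_N^c$ and $a\nabla\lambda_0\cdot\bn=0$ on $\Gamma_D^c$, coming from the boundary constraints baked into $W_h^0$.

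\emph{Stage 2 (recovering $u_h=0$).} Having $\3bar\lambda_h\3bar=0$, Cauchy--Schwarz for the semi-inner product $s$ yields $s(\lambda_h,\sigma)=0$ for every $\sigma\in W_h$. Substituting into \eqref{32} (with zero data) leaves $b(u_h,\sigma)=0$ for all $\sigma\in W_h^0$. Now invoke the inf-sup Lemma~\ref{lem3-new}: there is $\rho_{u_h}\in W_h^0$ with $b(u_h,\rho_{u_h})=\3bar u_h\3bar_1^2$, so $\3bar u_h\3bar_1=0$. Reading off the three terms in \eqref{EQ:triplebarnorm2} (including the jumps on boundary edges, where $[[v]]=v$) gives $\mathcal{L}u_h=0$ on each $T$, $u_h$ continuous across interior faces, and $u_h=0$ on $\partial\Omega$; together these force $u_h\in H^1_0(\Omega)$ solving a homogeneous uniformly elliptic Dirichlet problem, so $u_h\equiv 0$.

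\emph{Stage 3 (recovering $\lambda_h=0$)} — this is where I expect the main obstacle, since so far the two equations have essentially been used with orthogonality and no PDE on $\lambda_0$ has been extracted. The idea is to feed the continuity of $\lambda_0$ and its flux obtained in Stage 1 back into \eqref{disgradient*}: the boundary contributions drop, giving $(\mathcal{L}_w\lambda_h,w)_T=(\mathcal{L}\lambda_0,w)_T$ for all $w\in P_{k-2}(T)$. Because $a$ is piecewise constant and $\lambda_0|_T\in P_k(T)$, we have $\mathcal{L}\lambda_0|_T\in P_{k-2}(T)\subset M_h|_T$, so picking $v=\mathcal{L}\lambda_0$ in \eqref{2} and using $u_h=0$ (actually \eqref{2} alone suffices) yields $\sum_T\|\mathcal{L}\lambda_0\|_T^2=0$, i.e.\ $\mathcal{L}\lambda_0=0$ in $\Omega$. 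Writing $\Gamma_0=\partial\Omega\setminus(\Gamma_D\cup\Gamma_N)$, which is a nontrivial relatively open piece of $\partial\Omega$ by the standing assumption $\Gamma_D\cup\Gamma_N\Subset\partial\Omega$, we see that both $\lambda_0=0$ and $a\nabla\lambda_0\cdot\bn=0$ hold on $\Gamma_0$. Thus $\lambda_0$ solves the homogeneous elliptic Cauchy problem with zero Cauchy data on the nontrivial piece $\Gamma_0$; Lemma~\ref{unilem} (applied with the Dirichlet/Neumann pieces both taken to be $\Gamma_0$) gives $\lambda_0\equiv 0$, and then $\lambda_b=\lambda_0|_{\partial T}=0$ and $\lambda_n=a\nabla\lambda_0\cdot\bn=0$. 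Hence $\lambda_h=0$, completing uniqueness and therefore the theorem.
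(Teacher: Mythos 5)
Your proof is correct and follows essentially the same route as the paper: reduce to the homogeneous system, use the energy identity $s(\lambda_h,\lambda_h)=0$, extract $\mathcal{L}\lambda_0=0$ elementwise from \eqref{2} and conclude $\lambda_0\equiv 0$ via Lemma~\ref{unilem} on the nontrivial piece $(\Gamma_D\cup\Gamma_N)^c$, and kill $u_h$ via the inf-sup condition of Lemma~\ref{lem3-new}. The only (harmless) difference is ordering: you dispose of $u_h$ before $\lambda_h$ by noting that Cauchy--Schwarz for the semi-inner product $s$ already gives $s(\lambda_h,\sigma)=0$ for all $\sigma$, whereas the paper first proves $\lambda_h\equiv 0$ and then applies the inf-sup condition.
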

\begin{proof} It suffices to show that zero is the unique solution to the problem (\ref{32})-(\ref{2}) with homogeneous data $f=0$, $g_1=0$ and $g_2=0$. To this end, assume $f=0$, $g_1=0$ and $g_2=0$ in (\ref{32})-(\ref{2}). By letting $v=u_h$ and $\sigma=\lambda_h$, the difference of (\ref{2}) and (\ref{32}) gives $s(\lambda_h,\lambda_h)=0$, which implies $\lambda_0=\lambda_b$ and $a\nabla \lambda_0 \cdot \bn=\lambda_n$ on each $\partial T$. Therefore, together with the fact that $\lambda_h\in W_h^0$, we obtain $\lambda_0=0$ on $\Gamma_N^c$ and $a\nabla \lambda_0 \cdot \bn=0$ on $\Gamma_D^c$.

It follows from (\ref{2}), (\ref{disgradient*})  and the usual integration by parts that for any $v\in M_h$
\begin{equation}
\begin{split}
0=&b(v, \lambda_h)\\
=&\sum_{T\in \mathcal{T}_h}(v,  \mathcal{L}_w \lambda_h)_T \\
=&\sum_{T\in \mathcal{T}_h}  (\mathcal{L}\lambda_0, v)_T+\langle \lambda_0- \lambda_b, a \nabla v\cdot
\bm{n}\rangle_{\partial T}- \langle a\nabla \lambda_0\cdot \bn-\lambda_n, v \rangle_{\partial T}
\\
 =&\sum_{T\in \mathcal{T}_h}  ( \mathcal{L}\lambda_0, v)_T,\\
\end{split}
\end{equation}
where we have used $\lambda_0=\lambda_b$ and $a\nabla \lambda_0 \cdot \bn=\lambda_n$ on each $\partial T$. This implies $\mathcal{L}\lambda_0=0$ on each element $T\in\mathcal{T}_h$ by taking $v= \mathcal{L}\lambda_0$, which further yields $\mathcal{L}\lambda_0=0$ in $\Omega$. Since $\Gamma_D\cup \Gamma_N\Subset \partial \Omega$ is a proper closed subset,  $\Gamma_D^c\cap \Gamma_N^c=(\Gamma_D\cup \Gamma_N)^c$ contains a nontrivial portion of $\partial \Omega$. Since $\lambda_0=0$ on $\Gamma_N^c$ and $a\nabla \lambda_0 \cdot \bn=0$ on $\Gamma_D^c$, from Lemma \ref{unilem}, we get $\lambda_0\equiv 0$ in $\Omega$. It follows that $\lambda_h\equiv0$ in $\Omega$, as $\lambda_b=\lambda_0$ and $\lambda_n=a\nabla \lambda_0 \cdot \bn$ on each $\partial T$.

To demonstrate $u_h\equiv 0$, we use $\lambda_h\equiv 0$ in $\Omega$ and the equation (\ref{32}) to obtain
\begin{equation}\label{rew}
b(u_h, \sigma) = 0,\qquad \forall \sigma\in W_{h}^0,
\end{equation}
which, together with Lemma \ref{lem3-new}, leads to $u_h=0$. 
This yields ${\cal L} u_h=0$ on each element $T\in {\cal T}_h$, $[[a \nabla u_h\cdot \bn]]=0$ and $[[u_h]]=0$ on each $e \in {\cal E}_h^0$, and $a \nabla u_h\cdot \bn=0$ and $u_h=0$ on each $e\subset \partial \Omega$. Therefore, we have  ${\cal L} u_h=0$ in $\Omega$, $a \nabla u_h\cdot \bn=0$ and $u_h=0$ on $\partial\Omega$, for which there exists a unique solution $u_h=0$ in $\Omega$. 

This completes the proof of the theorem.
\end{proof}

\section{Error Equations}\label{Section:error-equations} This section is devoted to deriving the error equations for the PDWG scheme (\ref{32})-(\ref{2}) which will play a critical role in establishing the error estimates in the following section.

Let $u$ and $(u_h, \lambda_h) \in  M_{h} \times W_{h}^0$ be the exact solution of the elliptic Cauchy model problem (\ref{model}) and PDWG solution of the numerical scheme (\ref{32})-(\ref{2}), respectively. Note that the Lagrange multiplier $\lambda_h$ approximates the trivial solution $\lambda=0$. 
The error functions are defined as the difference between the numerical solution $(u_h, \lambda_h)$ and the $L^2$ projection of the exact solution of (\ref{model}); i.e.,
\begin{align}\label{error}
e_h&=u_h-\mathcal{Q}^{k-2}_hu,\\
 \varepsilon_h&=\lambda_h-Q_h\lambda=\lambda_h.\label{error-2}
\end{align}

\begin{lemma}\label{Lemma:LocalEQ} 
For any $\sigma\in W_{h}$ and $v\in M_{h}$, the following identity holds true:
\begin{equation*}
(\mathcal{L}_w \sigma, v)_T = (\mathcal{L} \sigma_0, v)_T +R_T(\sigma, v),
\end{equation*}
where
\begin{equation*}\label{EQ:April:06:100}
\begin{split}
R_T(\sigma,v) = \langle \sigma_0-\sigma_b, a \nabla  v\cdot
\bm{n}\rangle_{\partial T}-\langle a\nabla \sigma_0\cdot \bn-\sigma_n,    v \rangle_{\partial T}.
\end{split}
\end{equation*}
\end{lemma}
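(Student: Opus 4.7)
The statement is essentially immediate from the identity (\ref{disgradient*}) that was obtained by integrating (\ref{disvergence}) by parts, so my plan is simply to verify that the hypotheses of that identity are met and specialize the test function.

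First, I would recall that on each element $T$ the operator $\mathcal{L}_w \sigma$ is defined as $\mathcal{L}_{w,k-2,T}(\sigma|_T)$, which is the unique element of $P_{k-2}(T)$ characterized by (\ref{disvergence}) tested against all $w\in P_{k-2}(T)$. Since any $v\in M_h$ satisfies $v|_T\in P_{k-2}(T)$, the function $v$ is an admissible test function in (\ref{disvergence}), and hence also in its equivalent form (\ref{disgradient*}) which was obtained via integration by parts.

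Next, I would substitute $w=v|_T$ into (\ref{disgradient*}). This gives
\begin{equation*}
(\mathcal{L}_w \sigma, v)_T = (\mathcal{L}\sigma_0, v)_T + \langle \sigma_0-\sigma_b, a\nabla v\cdot\bm{n}\rangle_{\partial T} - \langle a\nabla \sigma_0\cdot\bn - \sigma_n, v\rangle_{\partial T},
\end{equation*}
which is exactly the desired decomposition with the remainder term $R_T(\sigma,v)$ as defined in the statement.

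There is no real obstacle here: the content of the lemma is just a restatement of (\ref{disgradient*}) with the test function restricted to the $M_h$ space, packaged in a form that isolates the strong-form contribution $(\mathcal{L}\sigma_0,v)_T$ from the boundary defects measuring how far $\sigma_b$ and $\sigma_n$ are from being the genuine traces of $\sigma_0$ and $a\nabla\sigma_0\cdot\bn$. The only mild consistency check worth mentioning is that since $\sigma_0\in P_k(T)$ and $a$ is piecewise constant, the function $\mathcal{L}\sigma_0\in P_{k-2}(T)$ and $a\nabla\sigma_0\cdot\bn$ on $\partial T$ is a polynomial, so all terms in $R_T(\sigma,v)$ are well defined; this justifies the legality of the underlying integration by parts used to pass from (\ref{disvergence}) to (\ref{disgradient*}).
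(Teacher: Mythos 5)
Your proof is correct and follows exactly the route the paper takes: the paper's proof is the one-line remark that the lemma follows from (\ref{disgradient*}), and you simply spell out that $v|_T\in P_{k-2}(T)$ is an admissible test function there. The additional consistency check about $\mathcal{L}\sigma_0$ and $a\nabla\sigma_0\cdot\bn$ being well defined is a reasonable bit of extra care but adds nothing beyond what the paper implicitly assumes.
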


\begin{proof}
This proof can be easily obtained using (\ref{disgradient*}).
\end{proof}

\begin{lemma}\label{errorequa}
Let $u$ and $(u_h;\lambda_h) \in M_{h}\times W_{h}^0$ be the solutions arising from (\ref{model}) and (\ref{32})-(\ref{2}), respectively. 
The error functions $e_h$ and $\varepsilon_h$ satisfy the following error equations; i.e.,
\begin{eqnarray}\label{sehv}
s(\varepsilon_h, \sigma)+b(e_h, \sigma)&=&\ell_u(\sigma),\qquad \forall\  \sigma\in W_{h}^0,\\
b(v, \varepsilon_h)&=&0,\qquad\qquad \forall v\in M_{h}, \label{sehv2}
\end{eqnarray}
where $\ell_u(\sigma)$ is given by
\begin{equation}\label{lu}
\begin{split}
\qquad \ell_u(\sigma) =&\sum_{T\in \mathcal{T}_h} \langle \sigma_0-\sigma_b, a \nabla (u-\mathcal{Q}_h^{k-2}u)\cdot \bm{n}\rangle_{\partial T}-\langle a\nabla \sigma_0\cdot \bn-\sigma_n, u-\mathcal{Q}_h^{k-2}u\rangle_{\partial T}.
\end{split}
\end{equation}
\end{lemma}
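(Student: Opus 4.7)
The strategy is algebraic: I would substitute $\varepsilon_h = \lambda_h$ (valid since $\lambda\equiv 0$) and $e_h = u_h - \mathcal{Q}_h^{k-2}u$ into \eqref{sehv}--\eqref{sehv2}, and reduce each identity to a scheme equation plus a projection residual. For \eqref{sehv2}, this is immediate from \eqref{2}. For \eqref{sehv}, subtracting $b(\mathcal{Q}_h^{k-2}u, \sigma)$ from both sides of \eqref{32} turns the claim into the single identity
$$b(\mathcal{Q}_h^{k-2}u, \sigma) = -(f, \sigma_0) - \langle g_2, \sigma_b\rangle_{\Gamma_N} + \langle g_1, \sigma_n\rangle_{\Gamma_D} - \ell_u(\sigma).$$

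To establish this, I would first apply Lemma \ref{Lemma:LocalEQ} element by element with $v = \mathcal{Q}_h^{k-2}u$. Because $a$ is piecewise constant on $\mathcal{T}_h$, we have $\mathcal{L}\sigma_0 \in P_{k-2}(T)$, so the orthogonality $(\mathcal{L}\sigma_0, u - \mathcal{Q}_h^{k-2}u)_T = 0$ allows us to replace $\mathcal{Q}_h^{k-2}u$ by $u$ in the volume term. Adding and subtracting $R_T(\sigma, u)$ then isolates the projection residual $\ell_u(\sigma) = \sum_T R_T(\sigma, u - \mathcal{Q}_h^{k-2}u)$, yielding
$$b(\mathcal{Q}_h^{k-2}u, \sigma) = \sum_{T\in\mathcal{T}_h}\bigl[(\mathcal{L}\sigma_0, u)_T + R_T(\sigma, u)\bigr] - \ell_u(\sigma).$$

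What remains is to identify the main sum with the scheme right-hand side. Running Lemma \ref{Lemma:LocalEQ} in reverse with $v = u$ rewrites the summand as $(\sigma_0, \mathcal{L}u)_T - \langle \sigma_b, a\nabla u\cdot\bn\rangle_{\partial T} + \langle \sigma_n, u\rangle_{\partial T}$. Substituting $\mathcal{L}u = -f$ and then reorganizing the edge sums delivers the identity once the boundary bookkeeping is done: the single-valuedness of $\sigma_b$ on $\mathcal{E}_h^0$ paired with the sign change of $a\nabla u\cdot\bn$ across interior interfaces (a consequence of $a\nabla u\in H(\operatorname{div};\Omega)$ from the PDE) eliminates interior contributions of the $\sigma_b$-sum, leaving $\langle \sigma_b, g_2\rangle_{\Gamma_N}$ after invoking $\sigma_b|_{\Gamma_N^c}=0$ and $a\nabla u\cdot\bn|_{\Gamma_N}=g_2$. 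An analogous cancellation for the $\sigma_n$-sum under the outward-normal convention, combined with $\sigma_n|_{\Gamma_D^c}=0$ and $u|_{\Gamma_D}=g_1$, produces the term $\langle \sigma_n, g_1\rangle_{\Gamma_D}$.

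The principal obstacle is precisely this final boundary bookkeeping: making the interior-edge cancellations rigorous in the ultra-low regularity setting requires the exact solution's normal flux $a\nabla u\cdot\bn$ to be a well-defined single-valued trace across $\mathcal{E}_h^0$, which is secured by $a\nabla u\in H(\operatorname{div};\Omega)$ from the equation itself rather than by extra smoothness on $u$. Once this is in place, the chain of equalities above closes up and both identities \eqref{sehv}--\eqref{sehv2} follow.
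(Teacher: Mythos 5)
Your proposal is correct and follows essentially the same route as the paper's proof: both reduce \eqref{sehv} to the identity for $b(\mathcal{Q}_h^{k-2}u,\sigma)$, invoke Lemma \ref{Lemma:LocalEQ}, use the $L^2$-orthogonality $(\mathcal{L}\sigma_0, u-\mathcal{Q}_h^{k-2}u)_T=0$ (valid since $\mathcal{L}\sigma_0\in P_{k-2}(T)$ for piecewise constant $a$), integrate by parts, and then use single-valuedness of $\sigma_b$, flux continuity, the boundary conditions in \eqref{model}, and the vanishing of $\sigma_b,\sigma_n$ on $\Gamma_N^c,\Gamma_D^c$ to recover the scheme's right-hand side, with the residual $\sum_T R_T(\sigma,u-\mathcal{Q}_h^{k-2}u)=\ell_u(\sigma)$ left over. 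The only difference is cosmetic bookkeeping (you isolate $\ell_u$ by adding and subtracting $R_T(\sigma,u)$ up front, the paper collects it at the end), and your explicit remark that the interior-edge cancellations rest on $a\nabla u\in H(\operatorname{div};\Omega)$ makes precise a step the paper leaves implicit.
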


\begin{proof} From (\ref{error-2}) and (\ref{2}) we have
\begin{align*}
b(v, \varepsilon_h) = b(v, \lambda_h) = 0,\qquad \forall v\in M_{h},
\end{align*}
which gives rise to (\ref{sehv2}).

Recall that $\lambda=0$. From (\ref{32}) we arrive at
\begin{equation}\label{EQ:April:04:100}
\begin{split}
 & s(\lambda_h-Q_h\lambda, \sigma)+b(u_h-\mathcal{Q}^{k-2}_hu, \sigma) \\
   = &-(f, \sigma_0)- \langle g_2, \sigma_b \rangle_{\Gamma_N}+\langle g_1, \sigma_n\rangle_{\Gamma_D}-b(\mathcal{Q}^{k-2}_hu, \sigma).
\end{split}
\end{equation}
As to the term $b(\mathcal{Q}^{k-2}_hu, \sigma)$, using Lemma  \ref{Lemma:LocalEQ} and the usual integration by parts gives
\begin{equation}\label{EQ:April:04:103}
\begin{split}
&b(\mathcal{Q}^{k-2}_hu, \sigma) \\
= & \sum_{T\in \mathcal{T}_h} (\mathcal{Q}^{k-2}_hu,  \mathcal{L}_w \sigma)_T \\
=& \sum_{T\in \mathcal{T}_h} (\mathcal{L}\sigma_0, \mathcal{Q}_h^{k-2}u)_T +R_T(\sigma, \mathcal{Q}_h^{k-2}u)\\
= & \sum_{T\in \mathcal{T}_h} (\mathcal{L}\sigma_0, u)_T   +R_T(\sigma, \mathcal{Q}_h^{k-2}u)\\
=&\sum_{T\in \mathcal{T}_h}  (\sigma_0,  \nabla \cdot (a\nabla u))_T-\langle \sigma_0,  a\nabla u \cdot\bn \rangle_{\partial T}+\langle a\nabla \sigma_0\cdot \bn, u\rangle_{\partial T} +R_T(\sigma, \mathcal{Q}_h^{k-2}u).\\
\end{split}
\end{equation}

Since $u$ is the exact solution of (\ref{model}), $\sigma_b=0$ on $\Gamma_N^c$ and $\sigma_n=0$ on $\Gamma_D^c$, we have
\begin{eqnarray}\label{EQ:April:04:105}
 \sum_{T\in \mathcal{T}_h}\langle \sigma_b,  a\nabla u \cdot\bn \rangle_{\partial T} & = & \langle \sigma_b, g_2 \rangle_{\Gamma_N},\\
\sum_{T\in \mathcal{T}_h}  \langle \sigma_n, u\rangle_{\partial T}& = &
\langle \sigma_n, g_1\rangle_{\Gamma_D}.\label{EQ:April:04:106}
\end{eqnarray}
Substituting  (\ref{EQ:April:04:105}), (\ref{EQ:April:04:106}) and (\ref{model}) into (\ref{EQ:April:04:103}), we arrive at
\begin{equation}\label{EQ:April:04:107}
\begin{split}
&b(\mathcal{Q}^{k-2}_hu, \sigma) \\
= & -(\sigma_0, f)-\sum_{T\in \mathcal{T}_h} \langle \sigma_0-\sigma_b,  a\nabla u \cdot\bn \rangle_{\partial T}+\langle a\nabla \sigma_0\cdot \bn-\sigma_n, u\rangle_{\partial T}\\
& -\langle \sigma_b, g_2 \rangle_{\Gamma_N}+\langle \sigma_n, g_1\rangle_{\Gamma_D}+R_T(\sigma, \mathcal{Q}_h^{k-2}u).
\end{split}
\end{equation}
Substituting (\ref{EQ:April:04:107})  into (\ref{EQ:April:04:100}) gives rise to the error equation (\ref{sehv}), which completes the proof of the lemma.
\end{proof}

\section{Error Estimates}\label{Section:Stability}
In this section, we shall demonstrate the optimal order of error estimates for the PDWG scheme (\ref{32})-(\ref{2}).

\begin{theorem} \label{theoestimate}
 Let $u$ be the exact solution of the elliptic Cauchy model problem (\ref{model}) and $(u_h, \lambda_h) \in M_{h} \times W_{h}^0$ be the numerical solution arising from PDWG method (\ref{32})-(\ref{2}) with $k\ge 2$. Assume (1) the coefficient tensor $a$ is piecewise constants in $\Omega$ with respect to the finite element partition $\mathcal{T}_h$ which is shape regular as specified in \cite{wy3655}; (2) the exact solution $u$ is sufficiently regular such that $u\in H^{m}(\Omega)$ ($2-\theta\leq m \leq k-1$) with $0\leq \theta<\frac12$. The following error estimates hold true; i.e.,
 \begin{equation}\label{erres}
\3bar \lambda_h \3bar+\3bar e_h\3bar_{1} 
\lesssim h^{m-1}\|u\|_{m}.
\end{equation}
\end{theorem}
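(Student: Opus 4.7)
The plan is to estimate $\3bar\varepsilon_h\3bar$ first by energy-testing the error equations, then recover $\3bar e_h\3bar_1$ from the inf-sup condition (Lemma 5.4). Since $\varepsilon_h = \lambda_h$ by \eqref{error-2}, controlling $\3bar\varepsilon_h\3bar$ is the same as controlling $\3bar\lambda_h\3bar$. The whole estimate reduces to a single key lemma: a bound of the form $|\ell_u(\sigma)| \lesssim h^{m-1}\|u\|_m \3bar\sigma\3bar$ for every $\sigma \in W_h^0$.

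\textbf{Step 1.} Choose $v=e_h \in M_h$ in \eqref{sehv2} to get $b(e_h,\varepsilon_h)=0$, then choose $\sigma=\varepsilon_h \in W_h^0$ in \eqref{sehv}. This collapses to $\3bar\varepsilon_h\3bar^2 = s(\varepsilon_h,\varepsilon_h) = \ell_u(\varepsilon_h)$. Granting the key estimate stated above, we immediately conclude $\3bar\varepsilon_h\3bar \lesssim h^{m-1}\|u\|_m$.

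\textbf{Step 2 (inf-sup argument for $e_h$).} Apply Lemma \ref{lem3-new} to $v = e_h$ to obtain $\rho \in W_h^0$ with $b(e_h,\rho)=\3bar e_h\3bar_1^2$ and $\3bar\rho\3bar \lesssim \3bar e_h\3bar_1$. Testing \eqref{sehv} with this $\sigma=\rho$ yields
\[
\3bar e_h\3bar_1^2 \;=\; b(e_h,\rho) \;=\; \ell_u(\rho) - s(\varepsilon_h,\rho).
\]
By Cauchy--Schwarz in the stabilizer and the key $\ell_u$ estimate, both terms on the right are bounded by $h^{m-1}\|u\|_m \3bar\rho\3bar \lesssim h^{m-1}\|u\|_m \3bar e_h\3bar_1$, from which the claim follows after dividing by $\3bar e_h\3bar_1$.

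\textbf{Step 3 (the key $\ell_u$ estimate, the main obstacle).} Split $\ell_u(\sigma)$ into the two boundary terms from \eqref{lu}. For the first term $\langle \sigma_0-\sigma_b, a\nabla(u-\mathcal{Q}_h^{k-2}u)\cdot\bn\rangle_{\partial T}$, Cauchy--Schwarz factors out $\bigl(\sum_T h_T^{-1}\|\sigma_0-\sigma_b\|_{\partial T}^2\bigr)^{1/2} \le \3bar\sigma\3bar$ and leaves $\sum_T h_T\|a\nabla(u-\mathcal{Q}_h^{k-2}u)\cdot\bn\|_{\partial T}^2$. Apply the low-regularity trace inequality \eqref{tracein} to the vector field $\nabla(u-\mathcal{Q}_h^{k-2}u)$, which needs $u \in H^{2-\theta}$ locally (precisely why the hypothesis $m\ge 2-\theta$ appears): this produces a bulk $H^1$-term plus an $H^{2-\theta}$-term, and both are handled by Lemma \ref{Lemma:8.1} with $t=1$ and $t=2-\theta$ respectively, each summing to $h^{2m-2}\|u\|_m^2$. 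For the second term $\langle a\nabla\sigma_0\cdot\bn-\sigma_n, u-\mathcal{Q}_h^{k-2}u\rangle_{\partial T}$, a symmetric Cauchy--Schwarz extracts $\3bar\sigma\3bar$ via the other half of $s_T$, and the residual $\sum_T h_T^{-1}\|u-\mathcal{Q}_h^{k-2}u\|_{\partial T}^2$ is again treated with \eqref{tracein} (applied to $u-\mathcal{Q}_h^{k-2}u$ itself) plus Lemma \ref{Lemma:8.1} at $t=0$ and $t=1-\theta$, producing the same $h^{2m-2}\|u\|_m^2$ bound.

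The main obstacle is Step 3: one must use the $\theta$-variant of the trace inequality in the exact pairing of $L^2$-projection approximation orders that makes the two exponents in \eqref{tracein} balance to give the common rate $h^{m-1}$. Once this lemma is in hand, Steps 1 and 2 are direct, and adding the two resulting inequalities yields \eqref{erres}.
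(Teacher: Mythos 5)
Your proposal is correct and follows essentially the same route as the paper: test the error equations with $\sigma=\varepsilon_h$ to reduce everything to the bound $|\ell_u(\sigma)|\lesssim h^{m-1}\|u\|_m\3bar\sigma\3bar$ (proved via Cauchy--Schwarz, the trace inequality \eqref{tracein} with the $\theta$-shift, and Lemma \ref{Lemma:8.1} at the exponent pairs you name), then recover $\3bar e_h\3bar_1$ from the inf-sup condition of Lemma \ref{lem3-new}. Your bookkeeping of the $h_T$-weights is equivalent to (and in fact slightly cleaner than) the paper's, and stating the $\ell_u$ bound for all $\sigma\in W_h^0$ rather than only for $\varepsilon_h$ is exactly what the paper implicitly uses in its final inf-sup step.
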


\begin{proof}
Letting $\sigma=\varepsilon_h=\{\varepsilon_0,\varepsilon_b,\varepsilon_n\}$ in
the error equation (\ref{sehv}) and using (\ref{sehv2}) and \eqref{lu} we arrive at
\begin{equation}\label{EQ:April7:001}
\begin{split}
&s(\varepsilon_h, \varepsilon_h) = \ell_u(\varepsilon_h)\\
   =&\sum_{T\in \mathcal{T}_h} \langle \varepsilon_0-\varepsilon_b, a \nabla ( u-\mathcal{Q}_h^{k-2}u) \cdot
\bm{n}\rangle_{\partial T}  +\langle \varepsilon_n - a\nabla \varepsilon_0\cdot \bn, u-\mathcal{Q}_h^{k-2}u\rangle_{\partial T}.\\
\end{split}
\end{equation}

Using Cauchy-Schwarz inequality, the trace inequality \eqref{tracein}, the estimate \eqref{term3}  and \eqref{EQ:triplebarnorm} gives
\begin{equation}\label{EQ:J2}
\begin{split} 
 & \left|\sum_{T\in \mathcal{T}_h} \langle \varepsilon_0-\varepsilon_b,  a \nabla ( u-\mathcal{Q}_h^{k-2}u) \cdot
\bm{n}\rangle_{\partial T}\right|\\
 \lesssim & \Big(\sum_{T\in \mathcal{T}_h}\|\varepsilon_0-\varepsilon_b\|_\pT^2\Big)^{\frac{1}{2}} \Big(\sum_{T\in \mathcal{T}_h} \|a\nabla ( u-\mathcal{Q}_h^{k-2}u)\|_\pT^2\Big)^{\frac{1}{2}} \\
\lesssim & \Big(\sum_{T\in \mathcal{T}_h}h_T^{-1}\| u-\mathcal{Q}_h^{k-2}u\|^2_{1,T} + h_T^{1-2\theta}\|u-\mathcal{Q}_h^{k-2}u\|_{2-\theta, T}^2\Big)^{\frac{1}{2}} \Big(\sum_{T\in \mathcal{T}_h}\|\varepsilon_0-\varepsilon_b\|_\pT^2\Big)^{\frac{1}{2}} \\
\lesssim &\  h^{m-1}\|u\|_{m} \3bar \varepsilon_h \3bar.
\end{split}
\end{equation} 

Similarly, from the Cauchy-Schwarz inequality and the trace inequality \eqref{tracein}, the estimate \eqref{term3}  and \eqref{EQ:triplebarnorm}, we obtain
\begin{equation}\label{EQ:J3}
\begin{split}
 &\left| \sum_{T\in \mathcal{T}_h}\langle \varepsilon_n - a\nabla \varepsilon_0\cdot \bn, u-\mathcal{Q}_h^{k-2}u\rangle_{\partial T}\right|\\
\le &\Big(\sum_{T\in \mathcal{T}_h} \|\varepsilon_n-a\nabla \varepsilon_0\cdot \bn \|_\pT^2\Big)^{\frac{1}{2}} \Big(\sum_{T\in \mathcal{T}_h}  \|u-\mathcal{Q}_h^{k-2}u\|_\pT^2\Big)^{\frac{1}{2}}\\
\lesssim &\Big(\sum_{T\in \mathcal{T}_h} \|\varepsilon_n-a\nabla \varepsilon_0\cdot \bn \|_\pT^2\Big)^{\frac{1}{2}}  \Big(\sum_{T\in \mathcal{T}_h} h_T^{-1}\|u-\mathcal{Q}_h^{k-2}u\|_T^2 + h_T^{1-2\theta} \|\nabla(u-\mathcal{Q}_h^{k-2}u)\|_{1-\theta, T}^2 \Big)^{\frac{1}{2}}\\
\lesssim &\  h^{m-1}\|u\|_{m} \3bar \varepsilon_h \3bar.
\end{split}
\end{equation} 

Substituting the estimates \eqref{EQ:J2} and \eqref{EQ:J3} into  \eqref{EQ:April7:001} yields
\begin{equation}\label{aij}
\3bar \varepsilon_h \3bar^2 = |\ell_u(\varepsilon_h)|\lesssim h^{m-1}\|u\|_{m} \3bar \varepsilon_h \3bar,
\end{equation} 
which leads to
\begin{equation}\label{EQ:April7:002}
\3bar \varepsilon_h \3bar  \lesssim h^{m-1}\|u\|_{m}.
\end{equation}
Furthermore, the error equation (\ref{sehv}) yields
$$
b(e_h, \sigma) = \ell_u(\sigma)-s(\varepsilon_h, \sigma), \qquad \forall \sigma\in W_h^0.
$$
It follows from (\ref{aij})-\eqref{EQ:April7:002}, Cauchy-Schwarz inequality, triangle inequality, and \eqref{EQ:triplebarnorm}  that
\begin{equation*}
\begin{split}
|b(e_h, \sigma)| \leq  |\ell_u(\sigma)| + \3bar \varepsilon_h\3bar  \3bar \sigma\3bar \lesssim h^{m-1}\|u\|_{m} \3bar \sigma \3bar\end{split}
\end{equation*}
for all $\sigma\in W_h^0$. Thus, from the \emph{inf-sup} condition (\ref{EQ:inf-sup-condition-01}) we obtain
\begin{equation*}
\begin{split}
\3bar e_h\3bar_{1} &\lesssim h^{m-1}\|u\|_{m},\end{split}
\end{equation*}
which, together with the error estimate (\ref{EQ:April7:002}), completes the proof of the theorem.
\end{proof}

%The triangle inequality and the error estimate (\ref{erres}) give
%the following optimal order error estimate for the numerical approximation of the primal variable.
%

For $C^0$-WG elements (i.e., $\sigma_b=\sigma_0$), $\ell_u(\sigma)$ in   (\ref{lu})  is   simplified into 
\begin{equation}\label{sim}
 \ell_u(\sigma) = -\sum_{T\in \mathcal{T}_h} \langle a\nabla \sigma_0\cdot \bn-\sigma_n, u-\mathcal{Q}_h^{k-2}u\rangle_{\partial T},
 \end{equation}
which is applicable to derive the error estimate for the primal variable under ultra-low regularity assumption $u\in H^{1-\theta} (\Omega)$ for $0\leq \theta<\frac12$. The conclusion is stated in the following corollary.
 
 \begin{corollary} \label{ErrorEstimate:4Primal:002}
Let $u$ be the exact solution of the elliptic Cauchy model problem (\ref{model}) and $(u_h, \lambda_h) \in M_{h} \times W_{h}^0$ be the numerical solution arising from PDWG method (\ref{32})-(\ref{2}) with $k\ge 2$. Assume (1) the coefficient tensor $a$ is piecewise constants in $\Omega$ with respect to the finite element partition $\mathcal{T}_h$ which is shape regular as specified in \cite{wy3655}; (2) the exact solution $u$ is sufficiently regular such that $u\in H^{m}(\Omega)$ ($1-\theta\leq m \leq k-1$) with $0\leq \theta<\frac12$. The following error estimates hold true; i.e.,
 \begin{equation*} 
\3bar \lambda_h \3bar+h^{2}\3bar e_h\3bar_{2} 
\lesssim h^{m}\|u\|_{m}.
\end{equation*}
\end{corollary}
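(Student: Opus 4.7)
The plan is to mirror the proof of Theorem \ref{theoestimate} almost step by step, but to exploit the fact that, for $C^0$-WG elements, the functional $\ell_u$ collapses to \eqref{sim}: only the single boundary term in which $u-\mathcal{Q}_h^{k-2}u$ (rather than its gradient) is paired with $a\nabla\sigma_0\cdot\bn-\sigma_n$ survives. This is the precise structural reason why $u\in H^{1-\theta}(\Omega)$ with $0\le\theta<\tfrac12$ will be enough.

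First I would re-derive the error equations \eqref{sehv}-\eqref{sehv2}, noting that their derivation in Lemma \ref{errorequa} uses only $u\in L^2(\Omega)$ together with the exact-solution identity, so it remains valid under the ultra-low regularity hypothesis. I then test \eqref{sehv} with $\sigma=\varepsilon_h$ and use \eqref{sehv2} with $v=e_h\in M_h$ to cancel the $b(e_h,\varepsilon_h)$ term, arriving at $\3bar\varepsilon_h\3bar^2=\ell_u(\varepsilon_h)$ with $\ell_u$ now given by \eqref{sim}.

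The central estimate is the control of this $\ell_u(\varepsilon_h)$. I would apply Cauchy--Schwarz in the form
\begin{equation*}
|\ell_u(\varepsilon_h)|\le\Bigl(\sum_{T\in\T_h}h_T\|a\nabla\varepsilon_0\cdot\bn-\varepsilon_n\|_{\partial T}^2\Bigr)^{\frac12}\Bigl(\sum_{T\in\T_h}h_T^{-1}\|u-\mathcal{Q}_h^{k-2}u\|_{\partial T}^2\Bigr)^{\frac12},
\end{equation*}
recognize the first factor as $\lesssim\3bar\varepsilon_h\3bar$ via the definition of the stabilizer, and attack the second factor with the trace inequality \eqref{tracein} applied to $\phi=u-\mathcal{Q}_h^{k-2}u\in H^{1-\theta}(T)$ followed by the approximation estimate \eqref{term3} used once at $t=0$ and once at the fractional index $t=1-\theta$ (both requiring only $m\ge 1-\theta$). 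Absorbing $\3bar\varepsilon_h\3bar$ yields the bound on $\3bar\lambda_h\3bar=\3bar\varepsilon_h\3bar$ asserted by the corollary.

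For the primal component I would substitute the just-obtained bound on $\3bar\varepsilon_h\3bar$ together with the estimate on $\ell_u$ into \eqref{sehv} to get $|b(e_h,\sigma)|\lesssim h^{m}\|u\|_m\,\3bar\sigma\3bar$ for every $\sigma\in W_h^0$ (up to the appropriate scaling), and then invoke an \emph{inf-sup} argument in the spirit of Lemma \ref{lem3-new} calibrated to the weaker norm $\3bar\cdot\3bar_2$ with the stated $h^2$ weight, which will convert the functional bound into the desired estimate on $h^2\3bar e_h\3bar_2$.

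The main obstacle I anticipate is the careful interplay between the fractional-order trace inequality \eqref{tracein} at index $1-\theta$ and the approximation estimate \eqref{term3} at the same fractional index: the local contributions must assemble with exactly the right power of $h$, and one has to verify that dropping the $\sigma_0-\sigma_b$ piece of $\ell_u$ (which was the sole reason Theorem \ref{theoestimate} required $u\in H^{2-\theta}$) truly allows the analysis to close at the one-derivative-lower regularity level. Beyond this, reconciling the $h^2\3bar e_h\3bar_2$ quantity with the inf-sup machinery of Lemma \ref{lem3-new} is the remaining bookkeeping step.
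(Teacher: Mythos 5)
Your strategy coincides with the paper's own proof, which consists of the single sentence ``follow the proof of Theorem \ref{theoestimate} and \eqref{sim}'': test the error equation with $\sigma=\varepsilon_h$, use the $C^0$ reduction \eqref{sim} so that only the pairing of $a\nabla\varepsilon_0\cdot\bn-\varepsilon_n$ with the boundary trace of $u-\mathcal{Q}_h^{k-2}u$ survives, and estimate that trace with \eqref{tracein} and \eqref{term3} at the indices $t=0$ and $t=1-\theta$, which is exactly what lowers the regularity requirement to $u\in H^{1-\theta}(\Omega)$. In that sense the route is the same, and your identification of the dropped $\sigma_0-\sigma_b$ term as the sole reason Theorem \ref{theoestimate} needed $u\in H^{2-\theta}$ is correct.

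However, the step you defer as bookkeeping hides a genuine discrepancy: carrying out your own chain does not produce the exponent asserted in the corollary. The first Cauchy--Schwarz factor is (part of) $\3bar\varepsilon_h\3bar$, and the second factor satisfies, by \eqref{tracein} and then \eqref{term3} with $t=0$ and $t=1-\theta$,
\begin{equation*}
\Bigl(\sum_{T\in\T_h}h_T^{-1}\|u-\mathcal{Q}_h^{k-2}u\|_{\partial T}^2\Bigr)^{\frac12}
\lesssim\Bigl(\sum_{T\in\T_h}h_T^{-2}\|u-\mathcal{Q}_h^{k-2}u\|_T^2+h_T^{-2\theta}\|u-\mathcal{Q}_h^{k-2}u\|_{1-\theta,T}^2\Bigr)^{\frac12}
\lesssim h^{m-1}\|u\|_m,
\end{equation*}
so the argument closes at $\3bar\lambda_h\3bar\lesssim h^{m-1}\|u\|_m$ --- the same rate as Theorem \ref{theoestimate}, now valid down to $m\ge 1-\theta$ --- and not at the $h^{m}\|u\|_m$ claimed in the statement. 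Redistributing the $h_T$-weights between the two Cauchy--Schwarz factors cannot improve this on a quasi-uniform mesh, so an extra power of $h$ would require a new ingredient (e.g.\ a duality argument or a rescaled stabilizer), none of which appears in the paper. You should therefore either supply that ingredient or note that the provable rate is $h^{m-1}$. Relatedly, the norm $\3bar\cdot\3bar_2$ is never defined in the paper, so your concluding ``inf-sup calibrated to $\3bar\cdot\3bar_2$ with the $h^2$ weight'' is not yet a proof step: you must write down that norm and establish the corresponding analogue of Lemma \ref{lem3-new} before the primal estimate can be claimed.
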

\begin{proof}
This proof is easily obtained by following the proof of Theorem \ref{theoestimate} and \eqref{sim}.
\end{proof}
 
\section{Numerical Experiments}\label{Section:numerics}
Extensive numerical results for the PDWG finite element scheme (\ref{32})-(\ref{2}) are reported in this section. The finite element partition $\T_h$ is generated through a successive uniform refinement for a coarse triangulation of the unit square domain $\Omega=(0,1)^2$ by dividing each coarse level triangular element into four congruent sub-triangles by connecting the three mid-points on the edges.
The numerical tests are based on the lowest order $k=2$, where the finite element space for the primal variable is $
M_{h, 2}=\{u_h: u_h|_T \in P_0(T), \forall T\in \mathcal{T}_h\}$,
and the weak finite element space for the dual variable is  
$
W_{h, 2}=\{\lambda_h=\{\lambda_0,\lambda_b, \lambda_n\}:  \lambda_0\in P_2(T), \lambda_b\in P_2(e), \lambda_n\in P_1(e), e\subset\pT, \forall T\in \mathcal{T}_h\}$.

%The weak finite element space $W_{h}$ is said to be of $C^0$-type if for any $v=\{v_0, v_b, v_n\} \in W_{h, k}$, one has $v_b=v_0|_\pT$ on each element $T\in \T_h$. Likewise, $C^{-1}$-type elements are defined as the general case of $v=\{v_0, v_b, v_n\} \in W_{h, k}$ for which $v_b$ is completely independent of $v_0$ on the edge of each element. It is clear that $C^0$-type elements involve fewer degrees of freedom compared with the $C^{-1}$-type elements. But $C^{-1}$-type elements have the flexibility in element construction and approximation. It should be noted that, for $C^{-1}$-type elements, the unknowns associated with $v_0$ can be eliminated locally on each element in parallel through a condensation algorithm before assembling the global stiffness matrix.

Let $u_h\in M_{h, 2}$ and  $\lambda_h=\{\lambda_0, \lambda_b, \lambda_n\}\in W_{h,2}$ be the numerical solutions arising from the PDWG scheme (\ref{32})-(\ref{2}). 
%The numerical solutions are compared with the interpolations of the exact solution $u$ and $\lambda$ in discrete $L^2$ norms. 
The primal variable $u_h$ is compared with the exact solution $u$ at the center of  each element  which is known as the nodal point interpolation $I_h u$. The auxiliary variable $\lambda_h$ approximates the true solution $\lambda=0$, and is compared with $Q_h\lambda=0$. The error functions are respectively denoted by
$$
\varepsilon_h=\lambda_h-Q_h\lambda=\{\lambda_0, \lambda_b,  \lambda_n\}, \
\
e_h=u_h-I_h u.
$$
The following $L^2$ norms are used to measure the error functions; i.e.,
$$
  \|e_h\|_0=\Big(\sum_{T\in {\cal
T}_h} \int_T e_h^2 dT\Big)^{\frac{1}{2}}, \quad
  \| \lambda_0\|_0=\Big(\sum_{T\in \mathcal{T}_h}
\int_T \lambda_0^2 dT\Big)^{\frac{1}{2}},$$
$$
\| \lambda_b\|_0=\Big(\sum_{T\in \mathcal{T}_h} h_T
\int_{\partial T} \lambda_b^2 ds\Big)^{\frac{1}{2}},
\| \lambda_n\|_0=\Big(\sum_{T\in \mathcal{T}_h} h_T
\int_{\partial T} \lambda_n^2 ds\Big)^{\frac{1}{2}}.
$$

Tables \ref{NE:TRI:Case1-1}-\ref{NE:TRI:Case1-2} indicate the correctness and reliability of the code for the elliptic Cauchy model problem (\ref{model}). The configuration of the test case is as follows: (1) the exact solution is $u=1$; (2) the coefficient tensor is $a=[1, 0; 0, 1]$; (3) both the Dirichlet and Neumann boundary conditions are set on the horizontal boundary segment $(0, 1)\times 0$ (see Table   \ref{NE:TRI:Case1-1}) or on the vertical boundary segment $0 \times (0, 1)$ (see Table \ref{NE:TRI:Case1-2}). Note that the numerical solution is consistent with the exact solution $u=1$. We observe from Tables \ref{NE:TRI:Case1-1}-\ref{NE:TRI:Case1-2} that the errors are in machine accuracy, especially for relatively coarse grids, which perfectly consists with the developed theory. 
However,  the error seems to deteriorate for the finer meshes which may be caused by the ill-posedness of the elliptic Cauchy problem and/or the poor conditioning of the discrete linear system.

\begin{table}[H]
\begin{center}
\caption{Numerical error with exact solution $u=1$; the coefficient tensor $a=[1, 0; 0, 1]$; Dirichlet and Neumann  on the boundary edge $(0,1)\times 0$.}\label{NE:TRI:Case1-1}
\begin{tabular}{|c|c|c|c|c|}
\hline
$1/h$ & $\|e_h\|_0$   &  $\|\lambda_0\|_{0}$  &     $\|\lambda_b\|_{0}$    & 
$\|\lambda_n\|_{0}$\\
\hline
1&  0E-16&   0.03131E-16    &0.05373E-16  &  0.1412E-16 \\
\hline
2	&   0.05537E-14&   0.01277E-14   &0.01931E-14   &0.1119E-14		
\\
\hline
4	&   0.2243E-14&   0.02680E-14  &0.04223E-14&   0.2141E-14 			
\\
\hline
8& 0.5730E-13&   0.03449E-13&   0.05200E-13&   0.1962E-13			
\\
\hline
16&    0.1824E-12&   0.009643E-12&  0.01422E-12&  0.06788E-12			
\\
\hline
32&     0.1406E-11 &  0.001204E-11&   0.001745E-11 &  0.01216E-11		
\\
\hline
\end{tabular}
\end{center}
\end{table}
\begin{table}[H]
\begin{center}
\caption{Numerical error with exact solution $u=1$; the coefficient tensor $a=[1, 0; 0, 1]$; Dirichlet and Neumann on the boundary edge $0 \times(0,1)$.}\label{NE:TRI:Case1-2}
\begin{tabular}{|c|c|c|c|c|} 
\hline
$1/h$ & $\|e_h\|_0$   &  $\|\lambda_0\|_{0}$  &     $\|\lambda_b\|_{0}$    & 
$\|\lambda_n\|_{0}$\\
\hline
1&    0.2220E-15 &  0.07805E-15 &   0.1246E-15 &   0.5440E-15			
\\
\hline
2	&     0.1407E-14&   0.03347E-14&   0.05395E-14&   0.1939E-14			\\
\hline
4	&      0.2757E-14&  0.01359E-14&   0.01797E-14&   0.2002E-14			\\
\hline
8&    0.2490E-13&   0.009452E-13&   0.01398E-13&   0.1041E-13				
\\
\hline
16&       0.1257E-12&   0.001696E-12&   0.002536E-12&   0.02520E-12			\\
\hline
32&       0.8767E-12&   0.01485E-12&   0.02134E-12&   0.09533E-12			\\
\hline
\end{tabular}
\end{center}
\end{table}

Tables \ref{NE:TRI:Case2-1}-\ref{NE:TRI:Case2-4} demonstrate the numerical performance of the PDWG scheme (\ref{32})-(\ref{2}) when the exact solutions are given by $u = \sin(x) \sin(y)$, $u =\sin(\pi x) \cos(\pi y)$, $u = xy(1-x)(1-y)$ and $u=e^{xy}$, respectively. The coefficient tensor is $a=[1+x^2, 0.25xy; 0.25xy, 1+y^2]$. The boundary conditions are set as follows: (1) Dirichlet on the boundary segments $(0, 1) \times 0$ and $(0, 1) \times 1$; and (2) Neumann on the boundary segments $1 \times  (0, 1)$ and $1 \times (0, 1)$. Note that this is a standard mixed boundary value problem without Cauchy data given on the boundary.  The numerical results show that the convergence rate for $e_h$ in the discrete $L^2$ norm is of an order a bit higher than ${\cal O}(h)$. This group of numerical examples imply the effectiveness of the PDWG algorithm \eqref{32}-\eqref{2} for the classical well-posed problems.

\begin{table}[H]
\begin{center}
\caption{Numerical convergence with exact solution $u=\sin(x) \sin(y)$; the coefficient tensor $a=[1+x^2, 0.25xy; 0.25xy, 1+y^2]$; Dirichlet on the boundary segments $(0, 1) \times 0$ and $(0, 1)\times 1$ and Neumann on the boundary segments
$1\times  (0, 1)$ and $1\times(0, 1)$.}\label{NE:TRI:Case2-1}
\begin{tabular}{|c|c|c|c|c|c|c|c|c|}
\hline
$1/h$ & $\|e_h\|_0$  & order &  $\|\lambda_0\|_{0}$  &  order&   $\|\lambda_b\|_{0}$ &order    & $\|\lambda_n\|_{0}$ & order 
\\
\hline
2&	0.02025&	 &	0.07907&	  &	0.1255 & &	0.7059&	 
\\
\hline
4&	0.008189 &	1.306&	0.01649 &	2.261 &	0.02475&	2.343&	0.1705&	2.050
\\
\hline
8&	0.003478 &	1.235 &	0.003921 &	2.073&	0.005726&	2.112&	0.04248 &	2.005
\\
\hline
16&	0.001604&	1.117&	0.0009725&	2.012&	0.001399 &	2.033 &	0.01063&	1.999
\\
\hline
32&	0.0007741&	1.051&	0.0002434	&1.998&0.00034727 &	2.010 &	0.002659&	1.999 
\\
\hline
\end{tabular}
\end{center}
\end{table}

\begin{table}[H]
\begin{center}
\caption{Numerical convergence with exact solution $u=\sin(\pi x) \cos(\pi y)$; the coefficient tensor $a=[1+x^2, 0.25xy; 0.25xy, 1+y^2]$; Dirichlet on the boundary segments $(0, 1) \times 0$ and $(0, 1)\times 1$ and Neumann on the boundary segments
$1\times  (0, 1)$ and $1\times(0, 1)$.}\label{NE:TRI:Case2-2}
\begin{tabular}{|c|c|c|c|c|c|c|c|c|}
\hline
$1/h$ & $\|e_h\|_0$  & order &  $\|\lambda_0\|_{0}$  &  order&   $\|\lambda_b\|_{0}$ &order    & $\|\lambda_n\|_{0}$ & order\\
\hline
1&	0.1983 	&&	1.775	&&	3.035 	&&5.942 	&
\\
\hline
2	&0.07463&	1.410 &	0.2382	&2.897 &0.3974	&2.933 &1.147 &	2.373
\\
\hline
4	&0.02809 &	1.410 &	0.04732	&2.332&	0.07001&	2.505&	0.3657	&1.650
\\
\hline
8&	0.01183 &	1.247 &	0.01125 &	2.072 &	0.01625 	&2.107	&0.09009 &2.021 
\\
\hline
16&	0.004935 &	1.262&	0.002793 &2.010 &	0.003997&	2.024&	0.02209 &	2.028
\\
\hline
32&	0.002181 &	1.178 &	0.0006993&	1.998&	0.0009954&	2.006 	&0.005478 & 2.012
\\
\hline
\end{tabular}
\end{center}
\end{table}

\begin{table}[H]
\begin{center}
\caption{Numerical convergence with exact solution $u=xy(1-x)(1-y)$; the coefficient tensor $a=[1+x^2, 0.25xy; 0.25xy, 1+y^2]$; Dirichlet on the boundary segments $(0, 1) \times 0$ and $(0, 1)\times 1$ and Neumann on the boundary segments
$1\times  (0, 1)$ and $1\times(0, 1)$.}\label{NE:TRI:Case2-3}
\begin{tabular}{|c|c|c|c|c|c|c|c|c|}
\hline
$1/h$ & $\|e_h\|_0$  & order &  $\|\lambda_0\|_{0}$  &  order&   $\|\lambda_b\|_{0}$ &order    & $\|\lambda_n\|_{0}$ & order\\
\hline
1&	0.03376&&	0.1206 	&&	0.2025	&&0.1335&	
\\
\hline
2	&0.008454 	&1.998 &	0.02765 &	2.125&	0.03792 &	2.417&	0.04527&	1.560
\\
\hline
4&	0.002336 &	1.856&	0.006620 &	2.062 &	0.009235 &	2.038&	0.01260 & 	1.845
\\
\hline
8&	0.0008282&	1.496 &	0.001651 &	2.003 &	0.002325 	&1.990 &	0.003207 &	1.975 
\\
\hline
16&	0.0003441 &	1.267 &	0.0004133 &	1.998 &	0.0005838 &	1.994 &	0.0007990 &	2.005 
\\
\hline
32&	0.0001560 &	1.142 	&0.0001034 &1.999&	0.0001462 &	1.998	&0.0001979&	2.014\\
\hline
\end{tabular}
\end{center}
\end{table}

 \begin{table}[H]
\begin{center}
\caption{Numerical convergence with exact solution $u=e^{xy}$; the coefficient tensor $a=[1+x^2, 0.25xy; 0.25xy, 1+y^2]$; Dirichlet on the boundary segments $(0, 1) \times 0$ and $(0, 1)\times 1$ and Neumann on the boundary segments
$1\times  (0, 1)$ and $1\times(0, 1)$.}\label{NE:TRI:Case2-4}
\begin{tabular}{|c|c|c|c|c|c|c|c|c|}
\hline
$1/h$ & $\|e_h\|_0$  & order &  $\|\lambda_0\|_{0}$  &  order&   $\|\lambda_b\|_{0}$ &order    & $\|\lambda_n\|_{0}$ & order
\\
\hline  
2&	0.05293 &	 &	0.1790 &  &	0.2650 &	 	&1.604 	& 
\\
\hline 
4&	0.02334 &	1.181 &	0.04208 &	2.089 &	0.06047	&2.132 	&0.4208 &	1.931
\\
\hline 
8&	0.009735 &	1.262&	0.01057&	1.993 &	0.01506 &	2.005&	0.1080 &	1.962 
\\
\hline 
16&	0.004345 &	1.164 &	0.002679 &	1.980 &	0.003805 &	1.985 &	0.02742 &	1.978 
\\
\hline
32	&0.002050 &1.084 &	0.0006762 &1.986	&0.0009587 	&1.989&	0.006912&1.988
\\
\hline
\end{tabular}
\end{center}
\end{table}

Tables \ref{NE:TRI:Case3-1}-\ref{NE:TRI:Case3-4} demonstrate the performance of the PDWG algorithm (\ref{32})-(\ref{2}) when the exact solutions are given by $u = \sin(x) \sin(y)$, $u =\cos(x) \cos(y)$, $u=xy(1-x)(1-y)$ and $u=e^{xy}$, respectively. The setting of boundary conditions is as follows: (1) both Dirichlet and Neumann on the boundary segments $(0, 1) \times 0$ and $(0, 1) \times 1$; and (2) Dirichlet on the boundary segment $0\times  (0, 1)$.  The coefficient tensor is $a=[1,0; 0,1]$. The numerical results in Tables \ref{NE:TRI:Case3-1}-\ref{NE:TRI:Case3-4} show that the convergence order for the primal variable is higher than ${\cal O}(h)$.

\begin{table}[H]
\begin{center}
\caption{Numerical convergence with exact solution $u=\sin(x)\sin(y)$; the coefficient tensor $a=[1, 0; 0, 1]$; Dirichlet and Neumann on the boundary segments $(0, 1) \times 0$ and $(0, 1) \times 1$, and Dirichlet on the boundary segment $0\times (0, 1)$.}\label{NE:TRI:Case3-1}
\begin{tabular}{|c|c|c|c|c|c|c|c|c|}
\hline
$1/h$ & $\|e_h\|_0$  & order &  $\|\lambda_0\|_{0}$  &  order&   $\|\lambda_b\|_{0}$ &order    & $\|\lambda_n\|_{0}$ & order\\
\hline
1 &0.07102 &&0.3979	&&	0.6416 	&&	2.777	&
\\
\hline
2&	0.03856 &	0.8810 &	0.06642&	2.583&	0.1032 	&2.636 &	0.4885&	2.507 
\\
\hline
4	&0.01390 &	1.473	&0.02039 &	1.703 	&0.03087&	1.741 &	0.1443 &	1.759 
\\
\hline
8&		0.003625&		1.939 &		0.006680 &		1.610 &		0.009868	&	1.645 &		0.04291	&	1.750 
\\
\hline
16&		0.001311&		1.467 &		0.002131 	&	1.648 &		0.003087 &		1.676 &		0.01235 	&	1.796 
\\
\hline
32&	 0.0004754& 1.463 	&	0.0007006 &	1.605 &	0.001005& 1.620	&	0.004066	&	  1.603 
\\
\hline
\end{tabular}
\end{center}
\end{table}

\begin{table}[H]
\begin{center}
\caption{Numerical convergence with exact solution $u=\cos(x)\cos(y)$; the coefficient tensor $a=[1, 0; 0, 1]$; Dirichlet and Neumann on the boundary segments $(0, 1) \times 0$ and $(0, 1) \times 1$, and Dirichlet on the boundary segment $0\times (0, 1)$.}\label{NE:TRI:Case3-2}
\begin{tabular}{|c|c|c|c|c|c|c|c|c|}
\hline
$1/h$ & $\|e_h\|_0$  & order &  $\|\lambda_0\|_{0}$  &  order&   $\|\lambda_b\|_{0}$ &order    & $\|\lambda_n\|_{0}$ & order \\
\hline
2&	0.02161&	 	&0.09323 & 	&0.138 &  &	0.4247	& 
\\
\hline
4&	0.009932	&1.121 &	0.02531	&1.881 &0.03693 &	1.904&	0.1116&	1.928 
\\
\hline
8&	0.003352	&1.567 	&0.006692&	1.919 &	0.009652 &	1.936 &	0.02920 &	1.934
\\
\hline
16&	0.001396&	1.264	&0.001836 &	1.866&	0.002630&	1.876&	0.007239	&2.012 
\\
\hline
32	&0.0006223 &1.165 &	0.0005124&	1.841 &	0.0007307 &	1.847 &	0.001986 &	1.866
\\
\hline
\end{tabular}
\end{center}
\end{table}

 \begin{table}[H]
\begin{center}
\caption{Numerical convergence with exact solution $u=xy(1-x)(1-y)$; the coefficient tensor $a=[1, 0; 0, 1]$; Dirichlet and Neumann on the boundary segments $(0, 1) \times 0$ and $(0, 1) \times 1$, and Dirichlet on the boundary segment $0\times (0, 1)$.}\label{NE:TRI:Case3-3}
\begin{tabular}{|c|c|c|c|c|c|c|c|c|}
\hline
$1/h$ & $\|e_h\|_0$  & order &  $\|\lambda_0\|_{0}$  &  order&   $\|\lambda_b\|_{0}$ &order    & $\|\lambda_n\|_{0}$ & order\\
\hline
1&	0.01042	&&	0.07688 &&	0.1406 &&	0.2900 &
\\
\hline
2&	0.004678&	1.155 &	0.02185 &	1.815&	0.02976 &	2.240 &	0.03095 &3.228
\\
\hline
4&	0.001630 	&1.521	&0.005182 	&2.076 	&0.007245 &2.038&	0.005847 &	2.404 
\\
\hline
8&	0.0005178&	1.655&	0.001261&	2.039 &	0.001780 &2.025 &	0.001644&	1.831
\\
\hline
16&	0.0002162&	1.260 &	0.0003168&	1.993&	0.0004484 	&1.989&	0.0004812	&1.772 
\\
\hline
32&	9.508E-05&	1.185&	8.028E-05&	1.980 	&0.0001137 &1.979 	&0.0001483 & 1.698
\\
\hline
\end{tabular}
\end{center}
\end{table}

 \begin{table}[H]
\begin{center}
\caption{Numerical convergence with exact solution $u=e^{xy}$; the coefficient tensor $a=[1, 0; 0, 1]$; Dirichlet and Neumann on the boundary segments $(0, 1) \times 0$ and $(0, 1) \times 1$, and Dirichlet on the boundary segment $0\times (0, 1)$.}\label{NE:TRI:Case3-4}
\begin{tabular}{|c|c|c|c|c|c|c|c|c|}
\hline
$1/h$ & $\|e_h\|_0$  & order &  $\|\lambda_0\|_{0}$  &  order&   $\|\lambda_b\|_{0}$ &order    & $\|\lambda_n\|_{0}$ & order\\
\hline
1	&0.1067 &&0.6226	&&	0.8525 	&&	4.572 	& 
\\
\hline
2&	0.05165 &	1.047 &	0.1483 &	2.070&	0.2128 &	2.002&	0.9598 &	2.252 
\\
\hline
4	&0.01646 &1.650 &	0.04342 &	1.772 	&0.06371&	1.740 &	0.2797	&1.779 
\\
\hline
8&	0.005303 &	1.634&	0.01198 &	1.858 &	0.01749 &	1.865&	0.07875&	1.828 
\\
\hline
16&	0.002344 &	1.178&	0.002741&	2.128&	0.003944 &	2.149	&0.01915	&2.040
\\
\hline
32	&0.0009561&1.294&	0.0007732&	1.826&	0.001105 	&1.836	&0.005242 &	1.869
\\
\hline
\end{tabular}
\end{center}
\end{table}

Tables \ref{NE:TRI:Case4-1}-\ref{NE:TRI:Case4-2} illustrate the numerical performance of the PDWG algorithm when both Dirichlet and Neumann conditions are given on the boundary segments $(0, 1) \times 0$ and $(0, 1) \times 1$. The exact solutions are taken by $u = \sin(\pi x) \sin(\pi y)$ and $u = xy(1-x)(1-y)$, respectively; and the coefficient tensor is $a=[1,0; 0,1]$. We can observe from the numerical results that the convergence rate for $e_h$ in the discrete $L^2$ norm is of an order a bit higher than ${\cal O}(h)$.

\begin{table}[H]
\begin{center}
\caption{Numerical convergence with exact solution $u=\sin(\pi x)\sin(\pi y)$; the coefficient tensor $a=[1, 0; 0, 1]$; Dirichlet and Neumann on the boundary segments $(0, 1) \times 0$ and $(0, 1) \times 1$.}\label{NE:TRI:Case4-1}
\begin{tabular}{|c|c|c|c|c|c|c|c|c|}
\hline
$1/h$ & $\|e_h\|_0$  & order &  $\|\lambda_0\|_{0}$  &  order&   $\|\lambda_b\|_{0}$ &order    & $\|\lambda_n\|_{0}$ & order\\
\hline
1&	0.1281 	&&	0.8807 	&&	1.501&&		0.3968 &	
\\
\hline
2	&0.04934&	1.377 &	0.2939&	1.583 	&0.4002 &	1.907	&0.3624 &0.1307
\\
\hline
4&	0.02366 &	1.060 	&0.07421 	&1.986&	0.1037&	1.949 &	0.08287	&2.129
\\
\hline
8&	0.01312 &	0.8503 	&0.01867&1.991 	&0.02633	&1.977 &	0.02670	&1.634 
\\
\hline
16&	0.006379	&1.041&	0.004740 &	1.977 &0.006706 &1.973 &	0.008679 &	1.621 
\\
\hline
32&	0.003017&	1.080 &	0.001215 &	1.964 &	0.001720 &1.963&	0.002968 &	1.548 
\\
\hline
\end{tabular}
\end{center}
\end{table}

\begin{table}[H]
\begin{center}
\caption{Numerical convergence with exact solution $u=xy(1-x)(1-y)$; the coefficient tensor $a=[1, 0; 0, 1]$; Dirichlet and Neumann on the boundary segments $(0, 1) \times 0$ and $(0, 1) \times 1$.}\label{NE:TRI:Case4-2}
\begin{tabular}{|c|c|c|c|c|c|c|c|c|}
\hline
$1/h$ & $\|e_h\|_0$  & order &  $\|\lambda_0\|_{0}$  &  order&   $\|\lambda_b\|_{0}$ &order    & $\|\lambda_n\|_{0}$ & order\\
\hline

2& 	0.002822 &   &  0.02069&   & 	0.02818	&  	& 0.02300 &  
\\
\hline
4& 	0.001610& 	0.8103 & 	0.005023 	& 2.042 	& 0.007021	& 2.005& 	0.005487& 	2.068
\\
\hline
8& 	0.0008414 & 	0.9358 & 	0.001246 	& 2.011	& 0.001758& 	1.998& 	0.001748 & 	1.650 
\\
\hline
16	& 0.0003811 & 1.142 & 	0.0003160 & 	1.979 	& 0.0004471& 	1.975 & 	0.0005608 & 1.640 
\\
\hline
32& 	0.0001708& 	1.158 	& 8.086E-05&  1.967	& 0.0001145	& 1.966	& 0.0001848&  1.602 
\\
\hline
\end{tabular}
\end{center}
\end{table}

Table \ref{NE:TRI:Case6-1} presents the numerical performance of the PDWG algorithm in the following configuration: (1) the exact solution is $u = e^{xy}$; (2) the coefficient tensor is  $a=[1+x^2, 0.25xy; 0.25xy, 1+y^2]$; (3) both Dirichlet and Neumann conditions are given on the boundary segments $(0, 1) \times 0$ and $(0, 1) \times 1$. We can observe from Table \ref{NE:TRI:Case6-1} that the convergence rate for the numerical solution $u_h$ in the discrete $L^2$ norm is of an order a little higher than ${\cal O}(h)$.

\begin{table}[H]
\begin{center}
\caption{Numerical convergence with exact solution $u=e^{xy}$; the coefficient tensor $a=[1+x^2, 0.25xy; 0.25xy, 1+y^2]$; Dirichlet and Neumann on the boundary segments $(0, 1) \times 0$ and $(0, 1) \times 1$.}\label{NE:TRI:Case6-1}
\begin{tabular}{|c|c|c|c|c|c|c|c|c|}
\hline
$1/h$ & $\|e_h\|_0$  & order &  $\|\lambda_0\|_{0}$  &  order&   $\|\lambda_b\|_{0}$ &order    & $\|\lambda_n\|_{0}$ & order\\
\hline
2&	0.09498 & &	0.1600 &	  &	0.2125&	  &	1.629 &	 
\\
\hline
4&	0.08063 &	0.2362 &	0.03763 &	2.088 &	0.05258 &	2.015&	0.4707 &	1.791
\\
\hline
8&	0.03599 &	1.164 &	0.01069 &	1.821 &	0.01550 &	1.762 &	0.1461&	1.688 
\\
\hline
16&	0.01465 	&1.297 &	0.002853	&1.900 &0.004152 &	1.900 &	0.04208 &	1.796
\\
\hline
32&	0.006831&	1.100 &	0.0008062 &	1.823 &	0.001161 	&1.838 	&0.01166&	1.852
\\
\hline
\end{tabular}
\end{center}
\end{table}

Tables \ref{NE:TRI:Case7-1}-\ref{NE:TRI:Case7-3} demonstrate the performance of the PDWG scheme for the numerical tests in the following configuration: (1) the coefficient tensor is  $a=[1+x^2, 0.25xy; 0.25xy, 1+y^2]$; (2) the boundary conditions are set by Dirichlet and Neumann on the boundary segments $(0, 1) \times 0$ and $(0, 1) \times 1$ and Neumann on the boundary segment 
$0 \times (0, 1)$; (3) the exact solutions are given by $u = \sin(x) \cos(y)$ and $u=e^{xy}$, respectively. Tables \ref{NE:TRI:Case7-1}-\ref{NE:TRI:Case7-3} indicate that the convergence order for the PDWG  solution $u_h$ in the discrete $L^2$ norm is of an order higher than the order ${\cal O}(h)$.

\begin{table}[H]
\begin{center}
\caption{Numerical convergence with exact solution $u=\sin(x)\cos(y)$; the coefficient tensor $a=[1+x^2, 0.25xy; 0.25xy, 1+y^2]$; Dirichlet and Neumann on the boundary segments $(0, 1) \times 0$ and $(0, 1) \times 1$, and Neumann 
on the boundary segment $0\times (0, 1)$.}\label{NE:TRI:Case7-1}
\begin{tabular}{|c|c|c|c|c|c|c|c|c|}
\hline
$1/h$ & $\|e_h\|_0$  & order &  $\|\lambda_0\|_{0}$  &  order&   $\|\lambda_b\|_{0}$ &order    & $\|\lambda_n\|_{0}$ & order
\\
\hline
1&	0.05811 &&	1.050 	&&1.993&&0.8839 	&
\\
\hline
2&	0.02234 &	1.379 &	0.2415 	&2.121 	&0.4034 	&2.304 &	0.1804 &2.293
\\
\hline
4&	0.009729	&1.199 &	0.06850 	&1.818	&0.1056	&1.934&	0.05650 &	1.675
\\
\hline
8&	0.004034	&1.270 &	0.02069 &	1.727 	&0.03058&	1.788 &	0.02364 &	1.257 
\\
\hline
16&	0.001292 &	1.642 &	0.005941	&1.800 &0.008589 &	1.832&	0.008732&	1.437
\\
\hline
32	&0.0005206 &	1.311 &	0.001395 &	2.090 &	0.001994 &	2.107 &	0.002616 &	1.739 
\\
\hline
\end{tabular}
\end{center}
\end{table}

\begin{table}[H]
\begin{center}
\caption{Numerical convergence with exact solution $u=e^{xy}$; the coefficient tensor $a=[1+x^2, 0.25xy; 0.25xy, 1+y^2]$; Dirichlet and Neumann on the boundary segments $(0, 1) \times 0$ and $(0, 1) \times 1$, and Neumann 
on the boundary segment $0\times (0, 1)$.}\label{NE:TRI:Case7-3}
\begin{tabular}{|c|c|c|c|c|c|c|c|c|}
\hline
$1/h$ & $\|e_h\|_0$  & order &  $\|\lambda_0\|_{0}$  &  order&   $\|\lambda_b\|_{0}$ &order    & $\|\lambda_n\|_{0}$ & order\\
\hline
1&	0.1009		&&2.097	&&	3.548 	&&	3.380 &	\\
\hline
2&	0.05792&	0.8002&	0.4685 &	2.162 &	0.7333 &	2.275&	1.300 &	1.379
\\
\hline
4	&0.02899	&0.9985 &	0.1087&	2.108 	&0.1647  &	2.154	&0.5187	&1.326
\\
\hline
8&	0.01315&	1.140 	&0.01971 &	2.463	&0.02922 &	2.495	&0.1603&	1.694 
\\
\hline
16&	0.004980 &	1.401&	0.005810 &	1.762 &	0.008517 &	1.779&	0.04668&	1.780
\\
\hline
32&	0.002143 &	1.216 &	0.001291&	2.170 &	0.001858 &	2.196 &	0.01099 &	2.086 
\\
\hline
\end{tabular}
\end{center}
\end{table}

Tables \ref{NE:TRI:Case8-1}-\ref{NE:TRI:Case8-2} demonstrate the performance of the PDWG algorithm  for the exact solutions $u = xy(1-x)(1-y)$ and $u=e^{xy}$, respectively. The  boundary conditions are set as follows: (1) Dirichlet and Neumann on the boundary segments $(0, 1) \times 0$ and $(0, 1) \times 1$, and (2) Neumann on the boundary segment $0\times  (0, 1)$. The coefficient tensor is $a=[1, 0; 0, 1]$. The numerical results in Tables \ref{NE:TRI:Case8-1}-\ref{NE:TRI:Case8-2} illustrate that the convergence rate for $e_h$ in the discrete $L^2$ norm is higher than the order ${\cal O}(h)$.

\begin{table}[H]
\begin{center}
\caption{Numerical convergence with exact solution $u = xy(1-x)(1-y)$; the coefficient tensor $a=[1, 0; 0, 1]$; Dirichlet and Neumann on the boundary segments $(0, 1) \times 0$ and $(0, 1) \times 1$, and Dirichlet
on the boundary segment $0\times (0, 1)$.}\label{NE:TRI:Case8-1}
\begin{tabular}{|c|c|c|c|c|c|c|c|c|}
\hline
$1/h$ & $\|e_h\|_0$  & order &  $\|\lambda_0\|_{0}$  &  order&   $\|\lambda_b\|_{0}$ &order    & $\|\lambda_n\|_{0}$ & order
\\
\hline
1	&0.01042	&&	0.07688 	&&	0.1406 &&	0.2900 	&
\\
\hline
2&	0.004678 	&1.155 &	0.02185 &	1.815&	0.02976 &	2.240 &	0.03095 &	3.228
\\
\hline
4&	0.001630 &	1.521&	0.005182 &	2.076 &	0.007245 &	2.038 &0.005847 &	2.404 
\\
\hline
8&	0.0005178&	1.655	&0.001261	&2.040	&0.001780	&2.025 &	0.001644	&1.831
\\
\hline
16&	0.0002162	&1.260 &0.0003168&	1.993&	0.0004484 &	1.989	&0.0004812&	1.772 
\\
\hline
32&	9.508E-05&	1.185 &	8.028E-05&	1.980 &	0.0001137 &1.980	&0.0001483 &1.698
\\
\hline
\end{tabular}
\end{center}
\end{table}

\begin{table}[H]
\begin{center}
\caption{Numerical convergence with exact solution $u = e^{xy}$; the coefficient tensor $a=[1, 0; 0, 1]$; Dirichlet and Neumann on the boundary segments $(0, 1) \times 0$ and $(0, 1) \times 1$, and Dirichlet
on the boundary segment $0\times (0, 1)$.}\label{NE:TRI:Case8-2}
\begin{tabular}{|c|c|c|c|c|c|c|c|c|}
\hline
$1/h$ & $\|e_h\|_0$  & order &  $\|\lambda_0\|_{0}$  &  order&   $\|\lambda_b\|_{0}$ &order    & $\|\lambda_n\|_{0}$ & order
\\
\hline
1&	0.1067&&	0.6226&&		0.8525 &&		4.572 &
\\
\hline
2&	0.05165 &	1.047 &	0.1483 &	2.070 &	0.2128 &	2.002&	0.9598 &	2.252 
\\
\hline
4&	0.01646	&1.650 	&0.04342 &	1.772 &	0.06371&	1.740 &	0.27965 &	1.779 
\\
\hline
8&	0.005303 &	1.634&	0.01197662	&1.858 	&0.01749 &1.865&	0.07875&	1.828 
\\
\hline
16	&0.002344 &	1.178&	0.002741	&2.128 &	0.003944 &	2.149 &	0.01915 &	2.040 
\\
\hline
32&	0.0009561&	1.294&	0.0007732&	1.826 &	0.001105 &	1.836 &	0.005242 	&1.869
\\
\hline
\end{tabular}
\end{center}
\end{table}

Table \ref{NE:TRI:Case5-1} demonstrate the performance of the PDWG method  when the exact solutions are given by $u_1=\sin(x)\cos(y)$, $u_2=\sin(\pi x)\sin(\pi y)$, $u_3=\sin(x)\sin(y)$ and $u_4=e^{xy}$ respectively. The boundary conditions are set as follows: (1) Dirichlet and Neumann conditions on the boundary segments $(0, 1) \times 0$ and $(0, 1) \times 1$; (2) Dirichlet on the boundary segment $0 \times (0,1)$; and (3) Neumann on the boundary segment $1 \times (0,1)$. The coefficient tensor is chosen by $a=[1+x^2, 0.25xy; 0.25xy, 1+y^2]$. We observe from Table \ref{NE:TRI:Case5-1} that the convergence order for the error $e_h$ in the discrete $L^2$ norm is of an order higher than ${\cal O}(h)$.

\begin{table}[H]
\begin{center}
\caption{Numerical convergence with exact solutions $u_1=\sin(x)\cos(y)$, $u_2=\sin(\pi x)\sin(\pi y)$, $u_3=\sin(x)\sin(y)$ and $u_4=e^{xy}$; the coefficient tensor $a=[1+x^2, 0.25xy; 0.25xy, 1+y^2]$; Dirichlet and Neumann on the boundary segments $(0, 1) \times 0$ and $(0, 1) \times 1$, Dirichlet on the boundary segment $0 \times (0,1)$ and Neumann on the boundary segment $1 \times (0,1)$.}\label{NE:TRI:Case5-1}
\begin{tabular}{|c|c|c|c|c|c|c|c|c|}
\hline
$1/h$ & $\|e_h\|_0$($u_1$)  & order &  $\|e_h\|_0$($u_2$)  &  order&  $\|e_h\|_0$($u_3$)  &  order&  $\|e_h\|_0$($u_4$)  &  order  
\\
\hline
2	&0.01916&	  &	0.06362&	 &	0.02025&	 &	0.06538 &	
\\
\hline
4&	0.006850 &1.484&	0.02385 &	1.415 	&0.008189 &	1.306 &0.02519 	&1.376 
\\
\hline
8&	0.002455 &	1.480 &	0.008741 &	1.448 &	0.003478 &1.235 &	0.01014 &	1.312 
\\
\hline
16&	0.0008979 &	1.451 	&0.003645 &	1.262&	0.001604&	1.117&	0.004450 	&1.189
\\
\hline
32&	0.0003603&	1.317 &	0.001663 &	1.132&	0.0007741	&1.051	&0.002072 	&1.103
\\
\hline
\end{tabular}
\end{center}
\end{table}

\end{document}